\documentclass[12pt]{amsart}

\usepackage{hyperref}
\usepackage{amssymb, amsthm, enumerate, mathptmx}

\usepackage{color}
\usepackage{cleveref}
\usepackage{comment}
\usepackage{varwidth}
\usepackage{extarrows}

\textwidth=15 cm
\textheight=22 cm
\topmargin=0.5 cm
\oddsidemargin=0.5 cm
\evensidemargin=0.5 cm
\footskip=40 pt

\newtheorem{thm}{Theorem}[section]
\newtheorem{lem}[thm]{Lemma}
\newtheorem{cor}[thm]{Corollary}
\newtheorem{prop}[thm]{Proposition}

\newtheorem{conj}[thm]{Conjecture}
\newtheorem{prob}[thm]{Problem}

\theoremstyle{definition} 
\newtheorem{defn}[thm]{Definition} 
\newtheorem{example}[thm]{Example}

\DeclareMathOperator{\codim}{codim}
\DeclareMathOperator{\pd}{pd}

\DeclareMathOperator{\ch}{char}
\DeclareMathOperator{\Sym}{Sym}
\DeclareMathOperator{\Inc}{Inc}
\DeclareMathOperator{\Tor}{Tor}
\DeclareMathOperator{\Ext}{Ext}
\DeclareMathOperator{\reg}{reg}
\DeclareMathOperator{\ind}{ind}
\DeclareMathOperator{\ini}{in}
\DeclareMathOperator{\Ker}{Ker}
\DeclareMathOperator{\im}{Im}
\DeclareMathOperator{\Min}{Min}

\def\Icc{{\mathcal I}}

\newcommand{\N}{\mathbb{N}}
\newcommand{\Z}{\mathbb{Z}}

\newcommand{\ba}{{\bf a}}

\title[Asymptotic behavior of symmetric ideals]{Asymptotic behavior of symmetric ideals:\\ A brief survey}

\author{Martina Juhnke-Kubitzke}
\address{Institut f\"ur Mathematik, Universit\"at Osnabr\"uck, 49069 Osnabr\"uck, Germany}
\email{juhnke-kubitzke@uos.de}

\author{Dinh Van Le}
\address{Institut f\"ur Mathematik, Universit\"at Osnabr\"uck, 49069 Osnabr\"uck, Germany}
\email{dlevan@uos.de}

\author{Tim R\"{o}mer}
\address{Institut f\"ur Mathematik, Universit\"at Osnabr\"uck, 49069 Osnabr\"uck, Germany}
\email{troemer@uos.de}

\begin{document}


\begin{abstract}
Recently, chains of increasing symmetric ideals have attracted considerable attention. In this note, we summarize some results and open problems concerning the asymptotic behavior of several algebraic and homological invariants along such chains, including codimension, projective dimension, Castelnuovo-Mumford regularity, and Betti tables.
\end{abstract}

\keywords{Invariant ideal, monoid, polynomial ring, symmetric group}
\subjclass[2010]{13A50, 13C15, 13D02, 13F20, 16P70, 16W22}

\maketitle

\section{Introduction}

This note provides a brief survey on recent developments in the study of asymptotic properties of chains of ideals in increasingly larger polynomial rings that are invariant under the action of symmetric groups.
Such chains have received much attention in the last decades as they arise naturally in various areas of mathematics, including
algebraic chemistry \cite{AH07, Dr10}, algebraic statistics \cite{BD11,Dr14,DEKL15,DK14,HM13,HS12,HS07,SS03}, 
group theory \cite{Co67}, and
representation theory \cite{CEF, GS18, NR17+, PS, SS-14,SS-16}.

As we will see, chains of ascending symmetric ideals are intimate related to their limits in a polynomial ring in infinitely many variables. Let us begin by fixing some notation. Throughout the paper, $\N$ denotes the set of positive integers, $c$ an element of $\N$, and $K$ an arbitrary field. Let
\[
 R=K[x_{k,j}\mid 1\le k\le c, j\ge 1]
\]
be the polynomial ring in ``$c\times\N$'' variables over $K$, and for each $n\in \N$ consider the following Noetherian subring of $R$:
\[
 R_n=K[x_{k,j}\mid 1\le k\le c,1\le j\le n].
\]
It is useful to view $R$ as the limit of the following ascending chain of polynomial rings 
\[
R_1\subseteq R_2\subseteq\cdots\subseteq R_n\subseteq\cdots.
\]
Let $\Sym(\infty):=\bigcup_{n\ge1}\Sym(n)$ denote the infinite symmetric group, where $\Sym(n)$ is the symmetric group on $\{1,\dots,n\}$ regarded as stabilizer of $n+1$ in $\Sym (n+1)$. One can also view $\Sym(\infty)$ as the limit of the following ascending chain of finite subgroups
\[
\Sym(1)\subseteq \Sym(2)\subseteq\cdots\subseteq \Sym(n)\subseteq\cdots.
\]
Consider the action of $\Sym(\infty)$ on $R$ by acting on the second index of the variables, i.e.,
\[
 \sigma \cdot x_{k,j}=x_{k,\sigma(j)} \quad\text{for }\ \sigma\in \Sym(\infty),\ 1\le k\le c,\ j\ge 1.
\]
Observe that this action restricts to an action of $\Sym(n)$ on $R_n$. 

We say that an ideal $I\subseteq R$ is \emph{$\Sym(\infty)$-invariant} (or \emph{$\Sym$-invariant} for short) if $\sigma(f)\in I$ for every $f\in I$ and $\sigma\in\Sym(\infty)$. In order to investigate such an ideal, a natural approach is to consider the truncated ideals $I_n=I\cap R_n$ for $n\ge 1$. Note that the sequence of truncated ideals $(I_n)_{n\ge1}$ forms an ascending chain, and furthermore, it is \emph{$\Sym$-invariant} in the sense that 
\[
 \Sym(m)(I_n)\subseteq I_m\quad\text{for all }\ m\ge n\ge 1.
\]
Conversely, when $(I_n)_{n\ge1}$ is a $\Sym$-invariant chain, its limit in $R$, i.e. the union $\bigcup_{n\ge1}I_n$, forms a $\Sym$-invariant ideal in $R$.

Although $R$ is not a Noetherian ring, a number of useful finiteness results have been established for this ring. For instance, it is known that $\Sym$-invariant ideals in $R$ satisfy the ascending chain condition, or in other words, $R$ is \emph{$\Sym$-Noetherian}. This celebrated result was first discovered by Cohen in his investigation of the variety of metabelian groups \cite{Co67, Co87}. The result was later rediscovered by Aschenbrenner and Hillar \cite{AH07} and Hillar and Sullivant \cite{HS12} with motivations from finiteness questions in chemistry and algebraic statistics. Generalizations of the $\Sym$-Noetherianity of $R$ were obtained by Nagel and R\"{o}mer \cite{NR17+} in the context of FI-modules.

Based on the $\Sym$-Noetherianity of $R$, Nagel and R\"{o}mer \cite{NR17} introduced Hilbert series for $\Sym$-invariant chains and showed that they are rational functions (see also \cite{KLS} for another proof using formal languages and \cite{GN} for some explicit results in a special case). As a consequence, they determined the asymptotic behaviors of the codimension and multiplicity along $\Sym$-invariant chains: the codimension grows eventually linearly, whereas the multiplicity grows eventually exponentially. This result leads to the following general problem (see \cite[Problem 1.1]{LNNR2}):

\begin{prob}
 Study the asymptotic behavior of invariants along $\Sym$-invariant chains of ideals.
\end{prob}

The aim of this note is to briefly summarize some recent results and open problems arising from the study of the previous problem. Apart from the aforementioned results of Nagel and R\"{o}mer, we will discuss further results on the asymptotic behaviors of the codimension \cite{LNNR2}, the Castelnuovo-Mumford regularity \cite{LNNR, Mu, Ra}, the projective dimension \cite{LNNR2, Mu}, and the Betti table \cite{Mu, NR17+} along $\Sym$-invariant chains of ideals. For the sake of simplicity, some results will not be stated in their most general form. Moreover, for the reader's convenience, a large number of examples will be provided.

The note is divided into seven sections.  In \Cref{sec2} we recall some basic notions and facts on invariant chains of ideals. \Cref{sec3} contains Nagel-R\"{o}mer's result on rationality of Hilbert series and its consequences on the asymptotic behaviors of codimension and multiplicity, together with an improvement on the codimension obtained in \cite{LNNR2}. The asymptotic behaviors of the Castelnuovo-Mumford regularity, the projective dimension, and the Betti table are discussed in Sections \ref{sec4}, \ref{sec5}, and \ref{sec6}, respectively. Finally, some open problems are proposed in \Cref{sec7}.


\section{Preliminaries}\label{sec2}

We use the notation and definitions from the introduction. In particular, $c$ is a fixed positive integer, $R$ is the polynomial ring $K[x_{k,j}\mid 1\le k\le c, j\ge 1]$, and for each $n\ge 1$, $R_n$ is the subring of $R$ generated by the first $c\times n$ variables. Moreover, for any monomial order $\le$ on $R$, we will use the same notation to denote its restriction to $R_n$.

Let $I\subseteq R$ be a $\Sym$-invariant ideal and consider the chain $(I_n)_{n\ge 1}$ of truncations of $I$. A useful technique for investigating the asymptotic properties of the chain $(I_n)_{n\ge 1}$ is to pass to the chain $(\ini_{\le}(I_n))_{n\ge 1}$ of initial ideals. But one issue then arises: unfortunately, the chain $(\ini_{\le}(I_n))_{n\ge 1}$ is typically not $\Sym$-invariant (see \Cref{ex1}). The reason is that the action of the symmetric group $\Sym(\infty)$ on $R$ is not compatible with monomial orders: for any monomial order $\leq$ on $R$, there exist monomials $u,v\in R$ with $u<v$ and some $\sigma\in \Sym (\infty)$ such that $\sigma(u)>\sigma(v)$; see \cite[Remark 2.1]{BD11}. To deal with this issue, one introduces another action on $R$ with a larger class of invariant ideals which behaves better with respect to monomial orders.

Consider the following monoid of strictly increasing functions on $\N$:
\[
 \Inc = \{\pi \colon \N \to \N \mid \pi(j)<\pi(j+1)\ \text{ for all }\ j\ge 1\}.
\]
Analogously to $\Sym(\infty)$, one defines the action of $\Inc$ on $R$ as follows\footnote{Let $i\ge0$ be an integer and consider the following submonoid of $\Inc$ that fixes the first $i$ natural numbers:
 \[
\Inc^i=\{\pi \in \Inc \mid \pi(j)=j\ \text{ for all }\ j\le i\}.
\]
Thus, in particular, $\Inc=\Inc^0$. We note that the notions and results presented in this survey can be extended to a more general setting, where the monoid $\Inc$ is replaced by $\Inc^i$ for any $i\ge0$. However, for the sake of simplicity, we will only concentrate on the monoid $\Inc$, referring the reader to \cite{LNNR,LNNR2,NR17,NR17+} for the general case.}:
\[
 \pi \cdot x_{k,j}=x_{k,\pi(j)} \quad\text{for }\ \pi\in \Inc,\ 1\le k\le c,\ j\ge 1.
\]

\begin{defn}
\label{def1}
 An ideal $I\subseteq R$ is called \emph{$\Inc$-invariant} if 
 \[
  \pi(I):=\{\pi(f)\mid f\in I\}\subseteq I\quad \text{for all }\ \pi\in \Inc.
 \]
 A chain of ideals $(I_n)_{n\ge 1}$ with $I_n\subseteq R_n$ is \emph{$\Inc$-invariant} if
\[
 \Inc_{m,n}(I_m):=\{\pi(I_m)\mid \pi\in \Inc_{m,n}\}\subseteq I_n\quad \text{for all }\ m\le n,
\]
where
\[
\Inc_{m,n}:=\{\pi \in \Inc \mid \pi(m)\le n\}.
\]
\end{defn}

It is evident that if $I\subseteq R$ is an {$\Inc$-invariant} ideal, then its truncations $I\cap R_n$ form an $\Inc$-invariant chain. Conversely, if $(I_n)_{n\ge 1}$ is an $\Inc$-invariant chain, then $I:=\bigcup_{n\ge 1}I_n$ is an {$\Inc$-invariant} ideal in $R$. 

Although $\Inc$ is not a submonoid of $\Sym(\infty)$, it turns out that the class of $\Inc$-invariant ideals contains the one of $\Sym$-invariant ideals (see, e.g., \cite[Lemma 7.6]{NR17}):

\begin{prop}
\label{prop22}
 For any $f\in R_m$ and $\pi\in\Inc_{m,n}$ with $m\le n$, there exists $\sigma\in\Sym(n)$ such that $\pi(f)=\sigma(f)$. Thus, it holds that $\Inc_{m,n}(f)\subseteq\Sym(n)(f)$. In particular:
 \begin{enumerate}
  \item 
  every $\Sym$-invariant ideal $I\subseteq R$ is also an $\Inc$-invariant ideal;
  \item
  every $\Sym$-invariant chain $(I_n)_{n\ge 1}$ is also an $\Inc$-invariant chain.
 \end{enumerate} 
\end{prop}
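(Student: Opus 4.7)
The plan is to produce, for each pair $(f,\pi)$ with $f\in R_m$ and $\pi\in\Inc_{m,n}$, a permutation $\sigma\in\Sym(n)$ whose action on $f$ agrees with that of $\pi$. The crucial observation is that $f$ involves only the variables $x_{k,j}$ with $j\le m$, so $\sigma$ need only agree with $\pi$ on the second indices $1,\dots,m$; what $\sigma$ does on $\{m+1,\dots,n\}$ is irrelevant to $\sigma(f)$. This reduces the statement to a purely combinatorial extension problem.

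First I would record that the restriction $\pi|_{\{1,\dots,m\}}$ is an injection into $\{1,\dots,n\}$, since $\pi$ is strictly increasing with $\pi(m)\le n$. Because $m\le n$, this injection extends to a bijection of $\{1,\dots,n\}$: the complement $\{1,\dots,n\}\setminus \pi(\{1,\dots,m\})$ has exactly $n-m$ elements, matching the size of $\{m+1,\dots,n\}$, and any bijection between these two sets provides the missing data. Define $\sigma$ to equal $\pi$ on $\{1,\dots,m\}$ and any such auxiliary bijection on $\{m+1,\dots,n\}$; then $\sigma\in\Sym(n)$. To verify $\pi(f)=\sigma(f)$, write $f$ as a $K$-linear combination of monomials in the variables $x_{k,j}$ with $1\le k\le c$ and $1\le j\le m$. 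Since both actions are defined monomial-wise by the rule $x_{k,j}\mapsto x_{k,\tau(j)}$, and $\sigma(j)=\pi(j)$ for all such $j$, the two actions coincide on each monomial of $f$, hence on $f$ itself. This yields $\Inc_{m,n}(f)\subseteq\Sym(n)(f)$.

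Parts (i) and (ii) then follow directly. For (i), given a $\Sym$-invariant ideal $I\subseteq R$, $f\in I$, and $\pi\in\Inc$, choose $m$ with $f\in R_m$ and $n\ge\pi(m)$; the main claim provides $\sigma\in\Sym(n)\subseteq\Sym(\infty)$ with $\pi(f)=\sigma(f)\in I$. For (ii), given a $\Sym$-invariant chain $(I_n)_{n\ge 1}$, $f\in I_m$, and $\pi\in\Inc_{m,n}$ with $m\le n$, the claim gives $\sigma\in\Sym(n)$ with $\pi(f)=\sigma(f)$, and $\Sym$-invariance of the chain forces $\sigma(f)\in I_n$. There is no genuine obstacle: the only content is the elementary extension of an injection to a permutation, and everything else reduces to the fact that the $\Inc$- and $\Sym$-actions are both given by substitution on second indices.
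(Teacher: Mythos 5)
Your argument is correct and is exactly the standard one: the paper itself only cites \cite[Lemma 7.6]{NR17} for this fact, and the proof there is the same extension argument, namely that $\pi$ restricted to $\{1,\dots,m\}$ is an injection into $\{1,\dots,n\}$ which can be completed to a permutation $\sigma\in\Sym(n)$ agreeing with $\pi$ on all second indices occurring in $f$. Your deductions of (i) and (ii) from the main claim also match the intended use of the definitions, so there is nothing to add.
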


In addition, the class of $\Inc$-invariant ideals is closed under the action of certain monomial orders on $R$, which is not true for the class of $\Sym$-invariant ideals (see \Cref{lem_initial_filtration} and \Cref{ex1} below). We say that a monomial order $\le$ \emph{respects} $\Inc$ if $\pi(u)\le \pi(v)$ whenever $\pi \in \Inc$ and $u,v$ are monomials of $R$ with $u\le v$. This condition implies that
\[
\ini_{\le}(\pi(f))=\pi(\ini_{\le}(f))\quad \text{for all } f\in R\ \text{ and } \pi \in \Inc.
\]
Examples of  monomial orders respecting $\Inc$ include the lexicographic order and the reverse-lexicographic order on $R$ induced by the following ordering of the variables:
\begin{equation}
\label{order}
 x_{k,j}\le x_{k',j'}\quad \text{if either $k<k'$ or $k=k'$ and $j<j'$}.
\end{equation}
The next simple lemma makes it possible to use the Gr\"{o}bner bases method in studying $\Inc$-invariant ideals, and hence, $\Sym$-invariant ideals as well:

\begin{lem}
	\label{lem_initial_filtration}
	Let $(I_n)_{n\ge 1}$ be an $\Inc$-invariant chain of ideals. Then for any monomial order $\le$ respecting $\Inc$, the chain $(\ini_{\le}(I_n))_{n\ge 1}$ is also $\Inc$-invariant. In particular, if $I\subseteq R$ is  an $\Inc$-invariant ideal, then $\ini_{\le}(I):=\bigcup_{n\ge 1}\ini_{\le}(I\cap R_n)$ is also an $\Inc$-invariant ideal in $R$.
\end{lem}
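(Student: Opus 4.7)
The plan is to prove the first assertion by a direct monomial computation using the compatibility $\ini_{\le}(\pi(f))=\pi(\ini_{\le}(f))$ that is built into the definition of ``respecting $\Inc$,'' and then deduce the second assertion as a formal consequence by taking unions.

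More precisely, to establish that $(\ini_{\le}(I_n))_{n\ge 1}$ is $\Inc$-invariant, I would fix $m\le n$ and $\pi\in\Inc_{m,n}$, and show $\pi(\ini_{\le}(I_m))\subseteq \ini_{\le}(I_n)$. Since $\ini_{\le}(I_m)$ is a monomial ideal, it suffices to check the containment on monomial generators, and more generally on any monomial $u\in\ini_{\le}(I_m)$. Such a $u$ is divisible by $\ini_{\le}(f)$ for some $f\in I_m$, say $u=v\cdot\ini_{\le}(f)$ with $v$ a monomial in $R_m$. Then
\[
\pi(u)=\pi(v)\cdot\pi(\ini_{\le}(f))=\pi(v)\cdot\ini_{\le}(\pi(f)),
\]
where the second equality uses that $\le$ respects $\Inc$. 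Now $\pi(f)\in I_n$ by the $\Inc$-invariance of the chain $(I_n)$, so $\ini_{\le}(\pi(f))\in \ini_{\le}(I_n)$, and since $\pi(v)\in R_n$, we conclude $\pi(u)\in \ini_{\le}(I_n)$, as desired.

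For the second part, I would apply the first part to the chain $I_n:=I\cap R_n$, which is $\Inc$-invariant by the remark preceding \Cref{prop22}. Given $g\in \ini_{\le}(I)=\bigcup_{n\ge 1}\ini_{\le}(I_n)$ and $\pi\in\Inc$, pick $m$ with $g\in\ini_{\le}(I_m)$ and choose any $n\ge \max\{m,\pi(m)\}$, so that $\pi\in\Inc_{m,n}$. By the first part $\pi(g)\in\ini_{\le}(I_n)\subseteq\ini_{\le}(I)$, which proves $\Inc$-invariance of $\ini_{\le}(I)$.

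I do not expect any serious obstacle: the entire argument rests on the identity $\ini_{\le}(\pi(f))=\pi(\ini_{\le}(f))$, which is precisely what ``respecting $\Inc$'' is designed to guarantee. The only mild subtlety is remembering that $\ini_{\le}(I_m)$ is a monomial ideal so that the check can be reduced to a single monomial multiple of an initial form, and verifying that $\pi(v)$ lands in $R_n$, which follows from $\pi(m)\le n$ together with $v\in R_m$.
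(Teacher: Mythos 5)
Your proof is correct. The paper itself does not spell out an argument: it cites \cite[Lemma 7.1]{NR17} for the first assertion and obtains the second exactly as you do, from the first assertion applied to the chain of truncations $I\cap R_n$ together with the observation after \Cref{def1}. Your direct computation — reducing to a monomial $u=v\cdot\ini_{\le}(f)$ with $f\in I_m$, applying the ring endomorphism $\pi$, and using $\ini_{\le}(\pi(f))=\pi(\ini_{\le}(f))$ plus $\pi(f)\in I_n$ and $\pi(R_m)\subseteq R_n$ for $\pi\in\Inc_{m,n}$ — is precisely the standard argument behind the cited lemma, and all the small points you flag (monomiality of $\ini_{\le}(I_m)$, $\pi(v)\in R_n$) are handled correctly.
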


\begin{proof}
 For the first assertion see, e.g., \cite[Lemma 7.1]{NR17}. The second assertion follows from the first one and the observation after \Cref{def1}.
\end{proof}


\begin{example}[{see also \cite[Example 2.2]{LNNR2}}]
	\label{ex1}
	Let $R=K[x_j\mid j\in\N]$ (i.e., $c=1$) with the field $K$ having characteristic $0$. Consider the $\Sym$-invariant ideal $I$ generated by $x_1^2+x_2x_3$. Let $(I_n)_{n\ge 1}$ be the chain of truncations of $I$. Then $I_1=I_2=\langle0\rangle$ and
	\begin{align*}
		I_3&=\Sym(3)(x_1^2+x_2x_3)=\langle x_1^2+x_2x_3,\; x_2^2+x_1x_3,\; x_3^2+x_1x_2\rangle,\\
		I_4&=\Sym(4)(x_1^2+x_2x_3)\\
		&=I_3+\langle x_1^2+x_2x_4,\; x_1^2+x_3x_4,\; x_2^2+x_1x_4,\; x_2^2+x_3x_4,\\
		&\hspace{1.43cm}x_3^2+x_1x_4,\; x_3^2+x_2x_4,\;
		x_4^2+x_1x_2,\; x_4^2+x_1x_3,\; x_4^2+x_2x_3\rangle.
	\end{align*}
	Computations with Macaulay2 \cite{GS} using the reverse-lexicographic order on $R$ induced by $x_1<x_2<x_3<\cdots$ give
	\begin{align*}
		\ini(I_3)&=\langle x_2^2, \; x_3x_2,\; x_3^2,\; x_2x_1^2,\; x_3x_1^2,\; x_1^4 \rangle,\\
		\ini(I_4)&=\langle x_2x_1,\; x_3x_1,\; x_4x_1,\; x_2^2,\; x_3x_2,\; x_4x_2,\; x_3^2,\; x_4x_3,\; x_4^2,\; x_1^3 \rangle.
	\end{align*}
	Since $x_2^2\in \ini(I_3)$ but $x_1^2\not\in \ini(I_3),$ we see that $\Sym(3)(\ini(I_3))\nsubseteq\ini(I_3)$. 
	Thus, the chain $(\ini(I_n))_{n\ge 1}$ is not $\Sym$-invariant. Moreover, the ideal $\ini(I)=\bigcup_{n\ge 1}\ini(I_n)$ is also not $\Sym$-invariant: otherwise, $x_1^2$ would belong to $\ini(I)$, which implies $x_1^2\in I$ (since $x_1^2$ is the smallest element of $R$ of degree 2), contradicting the fact that $I_1=0$.
	
	On the other hand, the chain $(\ini(I_n))_{n\ge 1}$ and the ideal $\ini(I)$ are both $\Inc$-invariant by \Cref{prop22} and \Cref{lem_initial_filtration}. For instance, one can check that
	\[
	\Inc_{3,4}(\ini(I_3))=\ini(I_3)+\langle x_4x_2,\; x_4x_3,\; x_4^2,\; x_4x_1^2 \rangle\subseteq\ini(I_4).
	\]
\end{example}

\Cref{prop22}, \Cref{lem_initial_filtration}, and \Cref{ex1} suggest that even if one is primarily interested in $\Sym$-invariant chains of ideals it is worthwhile and often more convenient to study the larger class of $\Inc$-invariant chains.

To conclude this section let us recall the following fundamental finiteness result (see \cite[Theorems 3.1, 3.6, Corollaries 3.5, 3.7]{HS12} and also \cite[Corollaries 3.6, 5.4, Lemma 5.2]{NR17}):

\begin{thm}
\label{Noetherianity}
 The following statements hold:
 \begin{enumerate}
  \item 
  The ring $R$ is $\Inc$-Noetherian, i.e., for any $\Inc$-invariant ideal $I\subseteq R$ there exist finitely many elements $f_1,\dots,f_m\in R$ such that
  \[
   I=\langle\Inc(f_1),\dots,\Inc(f_m)\rangle.
  \]
  \item
  Every $\Inc$-invariant chain $\Icc=(I_n)_{n\ge 1}$ stabilizes, i.e., there exists an integer $r\ge1$ such that for all $n\ge m\ge r$ one has
\[
I_n=\langle\Inc_{m,n}(I_m)\rangle
\]
as ideals in $R_n$. The least integer $r$ with this property is called the {\rm stability index} of $\Icc$, denoted by $\ind(\Icc).$
 \end{enumerate}
 
In particular, analogous statements hold if $\Inc$ is replaced by $\Sym(\infty)$.
\end{thm}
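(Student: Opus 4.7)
My plan is to prove (i) by reducing to monomial ideals and invoking Higman's lemma, and then to derive (ii) from (i) by passing to the union $I:=\bigcup_{n\ge 1}I_n$. The main obstacle lies in the Higman's lemma input underlying the monomial case of (i), which is where the substantive content of $\Inc$-Noetherianity sits; a secondary technical difficulty is the bookkeeping in (ii) described below.

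For (i), fix a monomial order $\le$ on $R$ respecting $\Inc$ (e.g., the lex order induced by \eqref{order}). By \Cref{lem_initial_filtration}, if $I\subseteq R$ is $\Inc$-invariant then so is the monomial ideal $\ini_{\le}(I)$, so it suffices to treat the monomial case: writing $\ini_{\le}(I)=\langle \Inc(u_1),\ldots,\Inc(u_m)\rangle$, I would lift to $f_j\in I$ with $\ini_{\le}(f_j)=u_j$ and carry out a Gr\"obner-style division inside each truncation $R_N$. Since $\le$ respects $\Inc$, we have $\ini_{\le}(\pi(f_j))=\pi(u_j)$ for every $\pi\in\Inc$, so every leading term appearing in $I$ is divisible by some $\pi(u_j)$ and can be cancelled by a multiple of $\pi(f_j)$, yielding $I=\langle \Inc(f_1),\ldots,\Inc(f_m)\rangle$.

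For the monomial case, factor each monomial of $R$ uniquely as $u=\prod_{j\ge 1}u_j$, where $u_j$ is a monomial in the column variables $\{x_{1,j},\ldots,x_{c,j}\}$. This identifies the monomials of $R$ with finite-support sequences over the alphabet $\mathcal{A}$ of monomials in $c$ variables. Dickson's lemma says $\mathcal{A}$ is well-quasi-ordered by divisibility, and Higman's lemma then yields that the set of such sequences is well-quasi-ordered under the subword-embedding order, which coincides with the $\Inc$-divisibility preorder $u\preceq v\;\Leftrightarrow\;\pi(u)\mid v$ for some $\pi\in\Inc$. Hence every $\Inc$-closed family of monomials has only finitely many minimal elements, proving (i) for monomial ideals.

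For (ii), the chain is automatically ascending (the identity lies in every $\Inc_{m,n}$ with $m\le n$), and $I$ is an $\Inc$-invariant ideal of $R$. By (i), $I=\langle\Inc(f_1),\ldots,\Inc(f_p)\rangle$ with each $f_j\in I_{r_j}$; set $r:=\max_j r_j$. The containment $\langle\Inc_{m,n}(I_m)\rangle\subseteq I_n$ for $n\ge m\ge r$ is immediate from $\Inc$-invariance. For the reverse, any $g\in I_n$ can be written as an $R_n$-linear combination of terms $\pi_k(f_{j_k})\in R_n$ (by clearing variables of index above $n$), and each such term lies in $\langle\Inc_{m,n}(I_m)\rangle$ by exhibiting $\sigma_k\in\Inc_{m,n}$ and $w_k\in I_m$ with $\pi_k(f_{j_k})=\sigma_k(w_k)$; the delicate point is that $w_k$ cannot in general be taken to be a single $\Inc_{r,m}$-translate of $f_{j_k}$, so one must exploit other elements of $I_m$ provided by $\Inc$-invariance of the chain. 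The analogous statements for $\Sym(\infty)$ then follow via \Cref{prop22}.
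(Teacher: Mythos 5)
The paper does not prove \Cref{Noetherianity} at all: it quotes the result from \cite{HS12} and \cite{NR17}, so the comparison is with those sources. Your part (i) follows exactly the standard route found there: reduce to monomial ideals via \Cref{lem_initial_filtration}, identify a monomial of $R$ with the finite sequence of its column-monomials, and combine Dickson's lemma with Higman's lemma to see that the preorder $u\preceq v \Leftrightarrow \pi(u)\mid v$ for some $\pi\in\Inc$ is a well-quasi-order; this is sound. One point to make explicit in the lifting step: the division does not stay ``inside each truncation $R_N$'' (a reducer $\pi(f_j)$ whose leading term divides $\ini_{\le}(g)$ for $g\in I\cap R_N$ may involve columns beyond $N$), so termination must come from the fact that the chosen order is a well-order on the monomials of all of $R$; this is true for the lexicographic order induced by \eqref{order} but is not a formal consequence of ``respecting $\Inc$'' and needs to be checked.

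The genuine gap is in (ii). Knowing $I=\bigcup_n I_n=\langle\Inc(f_1),\dots,\Inc(f_p)\rangle$ only gives generation over the big ring $R$, and your two moves to descend to $R_n$ do not work as stated. First, ``clearing variables of index above $n$'' is unjustified: applying $x_{k,j}\mapsto 0$ for $j>n$ to a representation $g=\sum_k h_k\pi_k(f_{j_k})$ destroys the terms $\pi_k(f_{j_k})$, whose images need not lie in any $\Inc$-orbit of the $f_j$'s. Second, and more importantly, the term-by-term containment you would need is simply false: take $c=1$ and the $\Inc$-invariant chain with $I_n=0$ for $n\le 4$ and $I_n=\langle x_1,\dots,x_{n-4}\rangle$ for $n\ge 5$; then $I=\langle\Inc(x_1)\rangle$ with $f_1=x_1\in I_5$ (so $r=5$ in your setup), yet $x_6=\pi(f_1)\in R_6$ lies in $I\cap R_6$ but not in $\langle\Inc_{5,6}(I_5)\rangle=\langle x_1,x_2\rangle=I_6$. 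This shows both that $I\cap R_n\ne I_n$ in general and that an $\Inc$-translate of some $f_j$ lying in $R_n$ need not lie in $\langle\Inc_{m,n}(I_m)\rangle$, so the argument must control \emph{which} translates occur in a representation of a given $g\in I_n$; saying that ``one must exploit other elements of $I_m$'' names this difficulty without resolving it. This restriction step is precisely what the sources cited in the paper supply (see \cite[Lemma 5.2, Corollary 5.4]{NR17} and \cite[Theorem 3.6]{HS12}, where it is done via an equivariant division argument carried out inside $R_n$), and without an argument for it your proof of (ii) is incomplete. The final reduction of the $\Sym(\infty)$ statements to the $\Inc$ ones via \Cref{prop22} is fine.
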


\begin{example}
 Let $R=K[x_j\mid j\in\N]$ with $\ch K=0$ and $n_0$ a natural number. Consider the ideal $I\subseteq R$ generated by the $\Sym$-orbits of the elements $x_1^n+\cdots+x_n^n$ for all $n\ge n_0$, i.e.,
 \[
  I=\langle\Sym(\infty)(x_1^n+\cdots+x_n^n)\mid n\ge n_0\rangle.
 \]
 It is evident that $I$ is a $\Sym$-invariant ideal. So by the previous theorem, $I$ has a finite set of generators up to the action of $\Sym(\infty)$. How many elements do we need to generate $I$?\footnote{We thank H. Brenner for asking this question and Hop D. Nguyen for suggesting the answer.} Quite surprisingly, one polynomial is enough, and in fact, $I$ is a monomial ideal. We claim that
 \[
  I=\langle\Sym(\infty)(x_1^{n_0})\rangle.
 \]
 Let $J$ denote the ideal on the right hand side. Then clearly $I\subseteq J$. To show the reverse inclusion observe that for $j>1$ one has
 \[
  \begin{aligned}
   x_j^{n_0}-x_1^{n_0}& =(x_{j}^{n_0}+x_{j+1}^{n_0}+\cdots+x_{j+n_0-1}^{n_0})-(x_{1}^{n_0}+x_{j+1}^{n_0}+\cdots+x_{j+n_0-1}^{n_0})\\
   &\in\langle\Sym(\infty)(x_{1}^{n_0}+\cdots+x_{n_0}^{n_0})\rangle\subseteq I.
  \end{aligned}
 \]
 It follows that
 \[
  n_0x_1^{n_0}=(x_{1}^{n_0}+\cdots+x_{n_0}^{n_0})-(x_2^{n_0}-x_1^{n_0})-\cdots-(x_{n_0}^{n_0}-x_1^{n_0})\in I.
 \]
 This gives $x_1^{n_0}\in I$ since $\ch K=0$. Hence, $I=J$.
\end{example}


\section{Hilbert series, multiplicity, and codimension}\label{sec3}

The celebrated Hilbert theorem says that the Hilbert series of any finitely generated graded module over a standard graded $K$-algebra $S$ is a rational function, and furthermore, this rational function encodes the dimension as well as the multiplicity of the module. In particular, given a proper graded ideal $J\subset S$, the Hilbert series of the quotient ring $S/J$,
\[
 H_{S/J}(t)= \sum_{u\ge0}\dim_K(S/J)_ut^u,
\]
can be uniquely expressed in the form
\[
 H_{S/J}(t)=\frac{Q(t)}{(1-t)^d}
\]
with $Q(t)\in\Z[t]$ and $Q(1)>0$. Moreover, one has that $d=\dim (S/J)$ and $Q(1)=e(S/J)$, the multiplicity of $S/J$. Note that $e(S/J)$ is also called the degree of $J$, denoted by $\deg (J)$.

The above result was extended to $\Inc$-invariant chains of ideals by Nagel and R\"{o}mer \cite{NR17}. They defined the Hilbert series for such a chain, showed its rationality, and obtained from that the asymptotic behaviors of the codimension and the degree of the ideals in the chain. Let us now summarize their results.

Let $\Icc=(I_n)_{n\ge 1}$ be an $\Inc$-invariant chain of graded ideals. Thus, each $I_n$ is a graded ideal in the polynomial ring $R_n=K[x_{k,j}\mid 1\le k\le c,1\le j\le n]$. The \emph{(equivariant) Hilbert series} of $\Icc$ is defined as the following bivariate formal power series
\[
 H_{\Icc}(s,t)= \sum_{n\ge0}H_{R_n/I_n}(t)s^n= \sum_{n\ge0,\;u\ge0}\dim_K(R_n/I_n)_us^nt^u.
\]
This series was introduced in \cite{NR17} as a way to encode all the Hilbert series of the quotient rings $R_n/I_n$ simultaneously. See also \cite{SS-14, Sn} for related notions. The following crucial fact is given in \cite[Proposition 7.2]{NR17}.

\begin{thm}
\label{Hilbert}
 Let $\Icc=(I_n)_{n\ge 1}$ be an $\Inc$-invariant chain of graded ideals. Then the Hilbert series of $\Icc$ is a rational function of the form
 \[
  H_{\Icc}(s,t)=\frac{g(s,t)}{(1-t)^a\prod_{l=1}^b[(1-t)^{c_l}-s\cdot f_{l}(t)]},
 \]
where $a,\;b,\;c_l\in\Z_{\ge0}$ with $c_l\le c$, $g(s,t)\in\Z[s,t]$, $f_{l}(t)\in\Z[t]$ and $f_{l}(1)>0$ for every $l=1,\dots,b$.
\end{thm}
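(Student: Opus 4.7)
The plan is to reduce to the case of monomial ideals and then carry out a combinatorial analysis of standard monomials. First, fix a monomial order $\le$ on $R$ respecting $\Inc$, for instance, the lexicographic order associated with (\ref{order}). By \Cref{lem_initial_filtration}, the chain $(\ini_{\le}(I_n))_{n\ge 1}$ is again $\Inc$-invariant, and since passage to initial ideals preserves the graded Hilbert function of each $R_n/I_n$, we have $H_\Icc(s,t)=H_{\ini_{\le}(\Icc)}(s,t)$. Hence it suffices to treat the case in which every $I_n$ is a monomial ideal.

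Assume now that $\Icc$ consists of monomial ideals. By the $\Inc$-Noetherianity part of \Cref{Noetherianity}, the limit ideal $I=\bigcup_{n\ge 1}I_n\subseteq R$ is generated by the $\Inc$-orbits of finitely many monomials $u_1,\dots,u_m$. Choose $r\ge\ind(\Icc)$ large enough that each $u_i$ lies in $R_r$. Then for all $n\ge r$ the standard monomials of $R_n/I_n$ are precisely the monomials of $R_n$ not divisible by any element of $\Inc_{r,n}(\{u_1,\dots,u_m\})$. The task reduces to enumerating these standard monomials, simultaneously in $n$ and in total degree.

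Next, I would encode each monomial in $R_n$ as a length-$n$ word over the alphabet $\N^c$ whose $j$th letter records the exponent vector of $x_{1,j},\dots,x_{c,j}$. The forbidden divisibilities translate into a finite list of forbidden patterns of consecutive column occupations (up to $\Inc$-shifts), so the set of words representing standard monomials is accepted by a finite automaton in which transitions depend on only a bounded amount of memory about recently used columns. A transfer-matrix (or rational-language) argument, with $s$ tracking word length and $t$ tracking total degree, yields rationality of $\sum_{n\ge r}H_{R_n/I_n}(t)s^n$; the finitely many terms with $n<r$ contribute only a polynomial correction, which can be absorbed into the numerator.

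The main obstacle is to refine rationality to the specific form $g(s,t)/\bigl((1-t)^a\prod_{l=1}^b[(1-t)^{c_l}-s\cdot f_l(t)]\bigr)$. To achieve this I would decompose the automaton into its strongly connected components: each loop component $l$ contributes one factor $\bigl[(1-t)^{c_l}-s\cdot f_l(t)\bigr]^{-1}$, where $c_l\le c$ counts the variables per column that remain completely free inside the loop (yielding the $(1-t)^{c_l}$ term), while $f_l(t)\in\Z[t]$ enumerates the finitely many constrained column-contributions allowed within the loop; the condition $f_l(1)>0$ reflects that the loop is genuinely traversable. The transient part of the automaton together with any globally free variables produces the prefactor $g(s,t)/(1-t)^a$. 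The delicate bookkeeping lies in matching the automaton's combinatorics to this prescribed factorization and in verifying the bounds $c_l\le c$ and $f_l(1)>0$ using how $\Inc$-orbits cut out admissible column patterns; this is where almost all the work is concentrated.
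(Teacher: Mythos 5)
Your reduction to monomial ideals via \Cref{lem_initial_filtration} and the stabilization from \Cref{Noetherianity} is sound, and the subsequent encoding of standard monomials as words with a transfer-matrix/regular-language argument does yield \emph{rationality} of $H_{\Icc}(s,t)$ — this is essentially the route of Krone--Leykin--Snowden \cite{KLS} (note two technical points you gloss over: the alphabet $\Z_{\ge 0}^c$ is infinite, so exponents must be capped at the largest exponent occurring in the generators and the excess tracked by geometric factors in $t$; and the forbidden patterns are \emph{scattered} subsequences of column vectors, not consecutive ones, so the automaton must record, for each generator, the longest prefix matched so far). The genuine gap is in the last step, which is exactly the content of the theorem as stated: you assert that each strongly connected component of the automaton contributes a single factor $\bigl[(1-t)^{c_l}-s\,f_l(t)\bigr]^{-1}$ with $c_l\le c$, $f_l\in\Z[t]$, $f_l(1)>0$. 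For a general finite automaton this is false: an SCC with $k>1$ states contributes $\det\bigl(\mathrm{Id}-sM_l(t)\bigr)$, a polynomial of degree up to $k$ in $s$ whose coefficients lie in $\Z[t,(1-t)^{-1}]$, and there is no reason it should split over $\Z[t]$ into factors that are \emph{linear} in $s$ of the prescribed shape, nor are the bounds $c_l\le c$ and the positivity $f_l(1)>0$ addressed by the decomposition you describe. Establishing such a splitting would require exploiting special structure of these particular languages, and nothing in your sketch does so; indeed the survey explicitly remarks that the formal-language approach ``does not seem to yield information on the denominator'' in the form claimed in \Cref{Hilbert}.

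For comparison: the survey itself gives no proof but cites Nagel--R\"omer \cite[Proposition 7.2]{NR17}, whose argument is of a different nature — a long induction on (partially) simplified chains using short exact sequences of the type $0\to R_n/(I_n:x)\to R_n/I_n\to R_n/(I_n+\langle x\rangle)\to 0$, which peels off factors $(1-t)^{-1}$ and $\bigl[(1-t)^{c_l}-s\,f_l(t)\bigr]^{-1}$ one at a time and thereby produces the denominator, the bound $c_l\le c$, and the condition $f_l(1)>0$ by construction. So your proposal would prove a weaker statement (rationality) by a legitimately different method, but as written it does not prove the theorem; to salvage it you would either need to prove the linear-in-$s$ factorization for the specific automata arising from $\Inc$-orbits of monomials, or switch to an inductive exact-sequence argument in the spirit of \cite{NR17}.
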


The proof of this result is rather long and involved. Nevertheless, the proof technique is very useful, as it has been employed in \cite{LNNR, LNNR2} to study the asymptotic behavior of the Castelnuovo-Mumford regularity, codimension and projective dimension along $\Inc$-invariant chains, the main results of which are summarized below. A shorter proof of the rationality of $H_{\Icc}(s,t)$ using formal languages is given by Krone, Leykin and Snowden \cite{KLS}. This approach, however, does not seem to yield information on the denominator of $H_{\Icc}(s,t)$ as stated in \Cref{Hilbert}. In the case that the chain $\Icc$ is generated by one monomial, the rational form of $H_{\Icc}(s,t)$ can be determined explicitly, as carried out in \cite{GN}. Recently, inspired by the independent set theorem \cite[Theorem 4.7]{HS12}, Maraj and Nagel \cite{MN} define multigraded Hilbert series of chains of ideals that are invariant under the action of a product of symmetric groups. They found a sufficient condition for the rationality of such series; see \cite[Theorem 3.5]{MN}.

\begin{example}
\label{ex32}
 Let $c=2$ and consider the ideal
 \[
  I=\langle x_{1,j}x_{2,k}\mid j,k\in\N\rangle\subseteq R.
 \]
It is clear that $I$ is an $\Inc$-invariant ideal. Let $\Icc=(I_n)_{n\ge 1}$ denote the chain of truncations of $I$. Thus,
\[
 I_n=I\cap R_n=\langle x_{1,j}x_{2,k}\mid 1\le j,k\le n\rangle.
\]
One may view $I_n$ as the edge ideal of the complete bipartite graph $K_{n,n}$ with vertex set $\{x_{1,1},\dots,x_{1,n}\}\cup \{x_{2,1},\dots,x_{2,n}\}$. So it is well-known that (see, e.g., \cite[Exercise 7.6.11]{Vi} or \cite[Theorem 2.1]{CN})
\[
 H_{R_n/I_n}(t)=\frac{2}{(1-t)^n}-1.
\]
Hence, we get
\[
 \begin{aligned}
  H_{\Icc}(s,t)&=1+\sum_{n\ge1}\Big(\frac{2}{(1-t)^n}-1\Big)s^n=\sum_{n\ge0}\frac{2s^n}{(1-t)^n}-\sum_{n\ge0}s^n\\
  &=\frac{2(1-t)}{1-t-s}-\frac{1}{1-s}=\frac{2(1-s)(1-t)-1}{(1-t-s)(1-s)}.
 \end{aligned}
\]
\end{example}

\Cref{Hilbert} has implications for the asymptotic behaviors of the codimension and degree of ideals in $\Inc$-invariant chains, which might be regarded as analogous to the fact mentioned above that the Hilbert series of an ideal encodes the codimension and degree of the ideal. The next result follows from \cite[Theorem 7.10]{NR17}.

\begin{cor}
\label{codim+degree}
 Let $\Icc=(I_n)_{n\ge 1}$ be an $\Inc$-invariant chain of proper graded ideals. Then the following statements hold:
 \begin{enumerate}
  \item 
  The codimension of $I_n$ is eventually a linear function on $n$. More precisely, there are integers $A$, $B$ with $0\le A\le c$ such that
  \[
   \codim (I_n)=An+B\quad\text{for all }\ n\gg0.
  \]
  \item
  The degree of $I_n$ grows eventually exponentially. More precisely, there are integers $M>0$, $L\ge0$ and a rational number $Q>0$ such that
  \[
   \lim_{n\to\infty}\frac{\deg (I_n)}{M^nn^L}=Q.
  \]
 \end{enumerate}

\end{cor}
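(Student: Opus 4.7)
The strategy is to extract the $s^n$-coefficient of the rational function $H_{\Icc}(s,t)$ supplied by \Cref{Hilbert} and read off the codimension and degree from the behavior of $H_{R_n/I_n}(t)=[s^n]H_{\Icc}(s,t)$ at $t=1$. Since $R_n/I_n$ is a standard graded algebra with proper homogeneous ideal, its dimension equals the pole order of $H_{R_n/I_n}(t)$ at $t=1$ and its degree (multiplicity) equals $\lim_{t\to 1}(1-t)^{\dim R_n/I_n}H_{R_n/I_n}(t)$. So both parts of the corollary reduce to an $n$-asymptotic analysis of this pole and its leading coefficient, using $\codim(I_n)=cn-\dim R_n/I_n$.

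The key computational step is to perform a partial fraction decomposition of $H_{\Icc}(s,t)$ with respect to $s$, viewing $K(t)$ as the coefficient field. Grouping repeated factors in $\prod_l[(1-t)^{c_l}-s\,f_l(t)]$ according to their multiplicities $m_l$ yields a representation
\[
H_{\Icc}(s,t)=P(s,t)+\frac{1}{(1-t)^a}\sum_{l,j}\frac{A_{l,j}(t)}{\big[(1-t)^{c_l}-s\,f_l(t)\big]^j},
\]
where $P(s,t)$ is a polynomial in $s$, $1\le j\le m_l$, and each $A_{l,j}(t)\in K(t)$. Using the expansion
\[
\frac{1}{[(1-t)^{c_l}-sf_l(t)]^j}=\sum_{n\ge 0}\binom{n+j-1}{j-1}\frac{f_l(t)^n}{(1-t)^{(n+j)c_l}}s^n,
\]
one obtains, for $n$ sufficiently large,
\[
H_{R_n/I_n}(t)=\sum_{l,j}\binom{n+j-1}{j-1}\frac{A_{l,j}(t)\,f_l(t)^n}{(1-t)^{a+(n+j)c_l}}.
\]

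Setting $C:=\max_l c_l\in\{0,1,\dots,c\}$, the pole order at $t=1$ of each summand is bounded by $a+(n+m_l)c_l\le a+Cn+Cm$ with $m:=\max_l m_l$. Thus $\dim R_n/I_n\le Cn+O(1)$; granted that no unexpected cancellation among the dominant summands occurs, equality $\dim R_n/I_n=Cn+B'$ holds for all $n\gg 0$ and some constant $B'$. This yields (i) with $A:=c-C\in[0,c]$ and $B:=-B'$. For (ii), multiplying by $(1-t)^{\dim R_n/I_n}$ and evaluating at $t=1$ isolates the dominant contributions and produces a finite sum of the form $\sum c_{l,j}\binom{n+j-1}{j-1}f_l(1)^n$, where $(l,j)$ runs over those indices achieving the maximal pole order. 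The asymptotics of this sum are of the shape $Q\,n^L M^n$, where $M=\max\{f_l(1)\}$ (positive by \Cref{Hilbert}) over the contributing indices, $L=(\max j)-1$ over pairs attaining that $M$, and $Q>0$ is a rational number obtained by summing the relevant leading coefficients.

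The main technical obstacle is to rule out cancellations in both the pole order and the leading coefficient, so as to justify the stated values of $A$, $B$, $M$, $L$, $Q$ and in particular to guarantee $Q>0$. The key inputs here are the positivity statement $f_l(1)>0$ from \Cref{Hilbert} and the nonnegativity of the coefficients of $H_{R_n/I_n}(t)$, which together force the putative leading terms to be genuinely nonvanishing for all large $n$. A careful execution of this cancellation-free analysis is what makes the proof of the underlying result in \cite{NR17} technically involved.
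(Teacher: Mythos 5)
Your overall strategy---extract the $s^n$-coefficient from the rational form in \Cref{Hilbert}, then read $\dim R_n/I_n$ off the pole order of $H_{R_n/I_n}(t)$ at $t=1$ and $\deg(I_n)$ off its leading Laurent coefficient---is the natural route, and it is essentially the route behind the result in \cite{NR17} that the survey simply cites (the text gives no proof; it refers to \cite[Theorem 7.10]{NR17}). But as written your argument has a genuine gap exactly where the theorem's content lies. What your computation honestly delivers is only the inequality $\dim R_n/I_n\le \max_l c_l\cdot n+O(1)$, i.e.\ a lower bound for the codimension; the two assertions of the corollary---that the pole order is \emph{eventually exactly} linear in $n$, and that $\deg(I_n)/(M^nn^L)$ \emph{converges} to a positive rational---are precisely the ``no cancellation'' statements that you defer with ``granted that no unexpected cancellation occurs.'' The mechanism you then invoke does not close this: nonnegativity of the coefficients of $H_{R_n/I_n}(t)$ constrains only the total sum, not the individual partial-fraction summands. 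The numerators $A_{l,j}(t)$ may vanish identically (so the factor with the largest $c_l$ may contribute nothing and the true slope is governed by a smaller $c_l$), they may have zeros or poles at $t=1$ (so your pole-order bookkeeping $a+(n+j)c_l$ is already off by constants you do not track), and contributions from distinct factors with the same $c_l$ can cancel at $t=1$ to an order that, a priori, could vary with $n$; positivity of $f_l(1)$ by itself rules out none of this, nor does it rule out, say, a pole order that oscillates between two linear functions.

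What is actually needed is an argument of the following type: after isolating the group of factors with the maximal $c_l$ that genuinely occur, each fixed Laurent coefficient at $t=1$ of that group's contribution to $H_{R_n/I_n}(t)$ is an exponential polynomial in $n$ of the form $\sum_l q_l(n)f_l(1)^n$ with positive integer bases $f_l(1)$ and rational polynomial coefficients; such a function is either identically zero or nonvanishing for all $n\gg0$, and one must further argue (using the distinctness of the $f_l$ within the group and the nonvanishing of some $A_{l,j}$) that not all of these coefficients vanish identically. Only then does one get that the pole order equals $Cn+B'$ for all large $n$, and that the leading coefficient behaves like $Q\,n^LM^n$ with $M$ one of the $f_l(1)$, $L$ the relevant polynomial degree, and $Q>0$ (positivity of $Q$ then coming for free from $\deg(I_n)>0$). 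None of this is carried out in your proposal; it is asserted, and attributed to the wrong cause. So the write-up should either supply this stabilization analysis or, as the survey does, cite \cite[Theorem 7.10]{NR17} for it; as it stands, the central step is missing.
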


The linear function $\codim (I_n)$ (for $n\gg0$) is further investigated in \cite{LNNR2}, where its leading coefficient is (somewhat combinatorially) determined. We will discuss this result in the remaining part of this section.

In view of \Cref{lem_initial_filtration}, we may assume that $\Icc=(I_n)_{n\ge 1}$ is an $\Inc$-invariant chain of monomial ideals. 
Let $G(I_n)$ denote the minimal set of monomial generators of $I_n$. The following notion is essential for determining the growth of the function $\codim (I_n)$.

\begin{defn}
	\label{def:i cover}
	Let $C$ be a subset of $[c]=\{1,\dots,c\}$. Assume that $I_n$ is a proper monomial ideal in $R_n$. We say that $C$ is a \emph{cover} of $I_n$ if for every $u\in G(I_n)$ there exist $k\in C$ and $j\ge 1$ such that $x_{k,j}$ divides $u$.
Set
\[
\gamma(I_n)=\min\{\#C \ | \ C \text{ is a cover of $I_n$} \}.
\] 
\end{defn}

\begin{example}
 Let $\Icc=(I_n)_{n\ge 1}$ be the $\Inc$-invariant chain in \Cref{ex32}. It is clear that each $I_n$ has 2 minimal covers, namely, $C_1=\{1\}$ and $C_2=\{2\}$. Thus,
 \[
 \gamma(I_n)=1\quad\text{for all }\ n\ge 1.
\]
\end{example}

There is an alternative way to compute $\gamma(I_n)$ using minimal primes of $I_n$. Let $\varphi$ be the map defined on the variables of $R$ by
\[
\varphi(x_{k,j})=k \quad\text{for every }\ j\ge 1.
\]
Then $\varphi$ clearly induces a map, still denoted by $\varphi$, from the set $\Min(I_n)$ of minimal primes of $I_n$ to the set of covers of $I_n$. Moreover, it can be shown that any minimal cover of $I_n$ is of the form $\varphi(P)$ for some $P\in \Min(I_n)$; see \cite[Proposition 3.3]{LNNR2}. Therefore,
\[
 \gamma(I_n)=\min\{\#\varphi(P)\mid P\in \Min(I_n)\}.
\]

\begin{example}
Assume $c\ge 4$ and consider the ideal
\[
I_3=\langle x_{1,2}^2x_{2,3}^3,\; x_{1,1}x_{3,2}^2,\; x_{4,2}^2\rangle \subset R_3.
\]
We have that
\[
  \Min(I_3)=\{\langle x_{1,1},\; x_{1,2},\;x_{4,2}\rangle,\ \langle x_{1,1},\;x_{2,3},\; x_{4,2}\rangle,\ \langle x_{1,2},\; x_{3,2},\; x_{4,2}\rangle,\ \langle x_{2,3},\; x_{3,2},\; x_{4,2}\rangle\}.
\]
Thus,
\[
 \varphi(\Min(I_3))=\{\{1,4\},\ \{1,2,4\},\ \{1,3,4\},\ \{2,3,4\}\},
\]
and so
\[
\gamma(I_3)=\min\{\#\varphi(P)\mid P\in \Min(I_3)\}=2.
\]
One can also compute $\gamma(I_3)$ by observing that $I_3$ has two minimal covers: $C_1=\{1,4\}$ and $C_2=\{2,3,4\}$.
\end{example}

Given an $\Inc$-invariant chain of proper monomial ideals $\Icc=(I_n)_{n\ge 1}$, it follows readily from \Cref{def:i cover} that $\gamma$ is a non-decreasing function along this chain, in the sense that $\gamma(I_n)\le \gamma(I_{n+1})$ for all $n\ge 1$ (see \cite[Lemma 3.5]{LNNR2}). Since $0\le \gamma(I_n)\le c$ by definition, $\gamma$ must stabilize, i.e., $\gamma(I_n)= \gamma(I_{n+1})$ for all $n\gg0$. In fact, one has $\gamma(I_n)= \gamma(I_{r})$ for all $n\ge r$, where $r=\ind(\Icc)$ denotes the stability index of $\Icc$. This is because the set of covers of $I_n$ is stable for $n\ge r$, which follows from the fact that the action of $\Inc$ on $R$ keeps the first index of the variables unchanged (see \cite[Lemma 3.6]{LNNR2}). So we may define
\[
 \gamma(\Icc)= \gamma(I_n)\quad\text{for some }\  n\ge \ind(\Icc). 
\]
This number is exactly the leading coefficient of the function $\codim (I_n)$ when $n\gg0$ (see \cite[Theorem 3.8]{LNNR2}):

\begin{thm}
 \label{codim}
 Let $\Icc=(I_n)_{n\ge 1}$ be an $\Inc$-invariant chain of proper monomial ideals. Then there exists an integer $B$ such that
 \[
  \codim (I_n)=\gamma(\Icc)n + B \quad \text{for all }\ n\gg 0.
 \]
\end{thm}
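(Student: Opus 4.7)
The plan is to establish matching asymptotic bounds
\[
\gamma(\Icc)\,n - K \le \codim(I_n) \le \gamma(\Icc)\,n
\]
for some constant $K = K(\Icc)$ independent of $n$ and all sufficiently large $n$, and then to combine them with the eventual linearity $\codim(I_n) = An + B$ from \Cref{codim+degree}(i) to pin down the leading coefficient $A = \gamma(\Icc)$.

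For the upper bound I would set $r = \ind(\Icc)$, pick a minimum cover $C^* \subseteq \{1,\dots,c\}$ of $I_r$ with $|C^*| = \gamma(\Icc)$, and form the monomial prime
\[
P^* := \langle x_{k,j} \mid k \in C^*,\ 1 \le j \le n\rangle \subseteq R_n.
\]
Since $P^*$ is prime of codimension $|C^*| \cdot n = \gamma(\Icc)\,n$ and contains $I_n$ (because the stabilized cover $C^*$ meets the first-index support of every minimal generator of $I_n$), the estimate $\codim(I_n) \le \gamma(\Icc)\,n$ is immediate.

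The lower bound is the delicate direction. Let $Q$ be any minimal prime of $I_n$, set $C = \varphi(Q)$, and write $Q_k = \{j : x_{k,j} \in Q\}$ and $T_k = \{1,\dots,n\} \setminus Q_k$ for $k \in C$. The $\Inc$-invariance of $\Icc$ turns each $u \in G(I_r)$ into a covering constraint: since $Q$ must contain a variable dividing $\pi(u)$ for every $\pi \in \Inc_{r,n}$, no $\pi$ can satisfy $\pi(j) \in T_{k'}$ simultaneously for all pairs $(k',j)$ with $x_{k',j} \mid u$ and $k' \in C$. A greedy/pigeonhole analysis of when such a $\pi$ fails to exist produces, for each $u$, a linear inequality on the $|Q_k|$'s. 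I would then select witnesses $v_k \in G(I_r)$ for each $k$ in a minimum cover $C^*$, that is with $\varphi(\supp v_k) \cap C^* = \{k\}$ (such $v_k$ exist because $C^*$ is in particular a minimal cover), and combine the $\gamma(\Icc)$ inequalities arising from $v_1, \ldots, v_{\gamma(\Icc)}$ to extract $|Q| \ge \gamma(\Icc)\,n - K$ for a uniform constant $K$. Together with $\codim(I_n) = An + B$, the sandwich $\gamma(\Icc)\,n - K \le An + B \le \gamma(\Icc)\,n$ for $n \gg 0$ forces $A = \gamma(\Icc)$.

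The main obstacle will be the lower bound, and specifically the case $\varphi(Q) \supsetneq C^*$: variables of $Q$ with first indices in $C \setminus C^*$ can be recruited to cover shifts of the witnesses $v_k$, potentially weakening the per-column bounds that the witness argument produces. The technical heart of the argument is therefore a careful combined count showing that such ``extra'' columns either carry enough mass on their own or only marginally reduce the contribution of the witness columns, so that the error term $K$ remains bounded uniformly in $n$.
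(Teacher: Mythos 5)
Your overall architecture is sound, and its upper-bound half is correct: since the covers of $I_n$ stabilize for $n\ge r=\ind(\Icc)$, a minimum cover $C^*$ of $I_r$ is a cover of every $I_n$ with $n\ge r$, so $I_n\subseteq P^*=\langle x_{k,j}\mid k\in C^*,\ 1\le j\le n\rangle$ and $\codim(I_n)\le\gamma(\Icc)\,n$; and once a uniform lower bound $\codim(I_n)\ge\gamma(\Icc)\,n-K$ is available, combining it with the eventual linearity of \Cref{codim+degree}(i) (applicable, as monomial ideals are graded) does force the slope to equal $\gamma(\Icc)$. Your witnesses $v_k$ also exist as claimed, because a minimum cover is in particular inclusion-minimal. (Note that the survey itself gives no proof of \Cref{codim}; it cites \cite{LNNR2}, so the comparison here is with a correct completion rather than with a proof printed above.)

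The gap is exactly where you place it, and it is not merely technical: the inequalities you propose cannot be combined to yield the bound. Tying the witnesses to one fixed minimum cover $C^*$ gives row-wise information about the sizes $\#Q_k$ for $k\in C^*$, but a minimal prime of $I_n$ may avoid the rows of $C^*$ altogether. For instance, with $c=2$ let $\Icc$ be generated by the $\Inc$-orbit of $x_{1,1}x_{2,1}$, so that $I_n=\langle x_{1,j}x_{2,j}\mid 1\le j\le n\rangle$, $\gamma(\Icc)=1$, and the minimum covers are $\{1\}$ and $\{2\}$; for $C^*=\{1\}$ the minimal prime $Q=\langle x_{2,1},\dots,x_{2,n}\rangle$ has $\#Q_1=0$, so no estimate of the shape $\#Q_1\ge n-K$ can hold, and all the ``mass'' of $Q$ sits in a row outside $C^*$ --- precisely the recruitment phenomenon you flag but do not resolve. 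What is true, and what the proof needs, is a column-wise statement: for a monomial prime $Q\supseteq I_n$ put $S_j=\{k\in[c]\mid x_{k,j}\in Q\}$; then the number of columns $j$ for which $S_j$ is \emph{not} a cover of $I_r$ is bounded by a constant depending only on $r$ and $\#G(I_r)$, not on $n$. Indeed, if there were too many such columns, by pigeonhole a single generator $u\in G(I_r)$ would admit at least $r$ of them, sufficiently far to the right and sufficiently spread out, whose types miss $\varphi(\supp u)$ entirely; an increasing map sending the support columns of $u$ into these columns (which exists by the spacing) produces an element of $\Inc_{r,n}(I_r)\subseteq I_n$ divisible by no variable of $Q$, a contradiction. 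Since every cover of $I_r$ has at least $\gamma(\Icc)$ elements, this yields $\sum_j\#S_j\ge\gamma(\Icc)(n-K)$ for every minimal prime $Q$, i.e.\ the uniform lower bound $\codim(I_n)\ge\gamma(\Icc)\,n-\gamma(\Icc)K$. Until an argument of this kind (or some other mechanism handling primes with $\varphi(Q)\not\subseteq C^*$) replaces your unresolved ``combined count,'' the proof is incomplete at its decisive step.
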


As a consequence, we obtain the following more explicit and slightly more general version of \Cref{codim+degree}(i) (see \cite[Corollary 3.12]{LNNR2}). Note that this result is applicable also to non-graded ideals. 

\begin{cor}
 \label{dim}
 Let $\Icc=(I_n)_{n\ge 1}$ be an $\Inc$-invariant chain of proper ideals, and let $\le$ be any monomial order on $R$ respecting $\Inc$. 
 Then there exists an integer $B$ such that 
  \[
  \codim (I_n)=\gamma(\ini_{\le}(\Icc))n + B\quad \text{for all }\ n\gg 0. 
 \]
 In particular, the coefficient $\gamma(\ini_{\le}(\Icc))$ does not depend on the order $\le$ and therefore will be simply denoted by $\gamma(\Icc)$.
\end{cor}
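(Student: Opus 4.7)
The plan is to reduce to the monomial-ideal case already handled by \Cref{codim}. For every $n\ge 1$ and every monomial order, a standard Gröbner-basis argument (valid for non-graded ideals as well, since one can pass to the associated graded with respect to the weight filtration induced by $\le$, or equivalently compare Hilbert functions of the initial and original ideals) gives
\[
 \codim(I_n)=\codim(\ini_{\le}(I_n)).
\]
So after this reduction, the problem becomes a statement purely about the initial chain $\ini_{\le}(\Icc):=(\ini_{\le}(I_n))_{n\ge 1}$.

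Next I would invoke \Cref{lem_initial_filtration}: since the monomial order $\le$ respects $\Inc$, the chain $\ini_{\le}(\Icc)$ is itself $\Inc$-invariant, and obviously consists of proper monomial ideals (the properness is inherited from $I_n\subsetneq R_n$, since taking initial ideals preserves this). Thus \Cref{codim} applies to $\ini_{\le}(\Icc)$, yielding an integer $B$ such that
\[
 \codim(\ini_{\le}(I_n))=\gamma(\ini_{\le}(\Icc))\,n+B\quad\text{for all }n\gg 0.
\]
Combining this with the equality $\codim(I_n)=\codim(\ini_{\le}(I_n))$ from the first step gives the asymptotic formula claimed.

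Finally, for the independence of the coefficient on the monomial order, I would observe that the left-hand side $\codim(I_n)$ does not depend on $\le$. Hence the leading coefficient on the right-hand side is forced to equal the intrinsic quantity $\lim_{n\to\infty}\codim(I_n)/n$, which is independent of $\le$; consequently so is $\gamma(\ini_{\le}(\Icc))$, justifying the notation $\gamma(\Icc)$. The only non-routine input is the equality $\codim(I_n)=\codim(\ini_{\le}(I_n))$ in the non-graded setting, which I expect to be the main conceptual point to verify carefully (or simply cite), since \Cref{codim} itself is the heavy machinery doing the real work.
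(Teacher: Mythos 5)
Your proposal is correct and follows essentially the same route as the paper: reduce to the monomial case via $\codim(I_n)=\codim(\ini_{\le}(I_n))$ (a standard fact valid for arbitrary, not necessarily graded, ideals, e.g.\ via the flat Gr\"obner degeneration), use \Cref{lem_initial_filtration} to see that $\ini_{\le}(\Icc)$ is an $\Inc$-invariant chain of proper monomial ideals, and apply \Cref{codim}; the independence of the leading coefficient from $\le$ follows exactly as you say, since it equals $\lim_{n\to\infty}\codim(I_n)/n$.
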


\begin{example}
\label{ex39}
 Let $p$ be a positive integer with $p\le c$. For $n\ge 1$ we view the variables of $R_n$ as a matrix $X_{c\times n}$ of size $c\times n$ and we denote by $I_n$ the ideal generated by all $p$-minors of this matrix. Then the chain $\Icc=(I_n)_{n\ge 1}$ is evidently $\Sym$-invariant, and therefore, it is also $\Inc$-invariant. Let $\le$ be a \emph{diagonal term order} on $R$ that respects $\Inc$ (e.g., the lexicographic order extending the order of the variables as given in \eqref{order}). Then by \cite[Theorem 1]{St} (see also \cite[Theorem 5.3]{BC}), the $p$-minors of $X_{c\times n}$ form a Gr\"{o}bner basis for $I_n$ with respect to $\le$ for all $n\ge p$. It follows that
 \[
 \ini_{\le}(I_n)=\langle x_{k_1,j_1}\cdots x_{k_p,j_p}\mid 1\le k_1<\cdots <k_p\le c,\ 1\le j_1<\cdots <j_p\le n \rangle\ \text{ for }\ n\ge p.
 \]
 Now one can easily check that the minimal covers of  $\ini_{\le}(I_n)$ for $n\ge p$ are exactly subsets of $[c]$ of cardinality $c-p+1$. Thus, $\gamma(\ini_{\le}(\Icc))=c-p+1$, and so \Cref{dim}  gives
 \[
 \codim (I_n)=(c-p+1)n + B\quad \text{for all }\ n\gg 0,
 \]
 where $B$ is a constant integer. Note that an exact formula for $\codim (I_n)$ is known, namely,
 \[
 \codim (I_n)=(c-p+1)(n-p+1)\quad \text{for all }\ n\ge p
 \]
 (see, e.g., \cite[Theorem 6.8]{BC}).
\end{example}

\Cref{codim} and \Cref{dim} provide a convenient way to compute the constant $A$ in \Cref{codim+degree}. It is therefore desirable to ask for similar results for the other constants.

\begin{prob}
 Determine the constants $B$, $M$, $L$, $Q$ in \Cref{codim+degree}.
\end{prob}


In the following we will concentrate on some homological invariants, such as Betti numbers, the Castelnuovo-Mumford regularity or the projective dimension. Let us briefly recall these notions. 

Let $S=K[y_1,\dots,y_m]$ be a standard graded polynomial ring and $J\subsetneq S$ a nonzero graded ideal. Assume that the minimal graded free resolution of $J$ is given by
\[
 0\rightarrow \bigoplus_{j\in\Z}S(-j)^{\beta_{p,j}(J)}\xrightarrow{\partial_p}\cdots\xrightarrow{\partial_2} \bigoplus_{j\in\Z}S(-j)^{\beta_{1,j}(J)}\xrightarrow{\partial_1} \bigoplus_{j\in\Z}S(-j)^{\beta_{0,j}(J)}\xrightarrow{\partial_0} J\xrightarrow{\partial_{-1}} 0,
\]
where $\partial_{-1}$ is the zero map. Then the numbers $\beta_{i,j}(J)$ are called the \emph{graded Betti numbers} of $J$. They can be expressed as
\[
 \beta_{i,j}(J)=\dim_K\Tor^S_i(J,K)_j.
\]
The graded Betti numbers of $J$ are usually given in a table, called the \emph{Betti table} of $J$, in which the entry in the $i$-th column and $j$-th row is $\beta_{i,i+j}(J)$; see \Cref{fig1}.

\begin{figure}[h]
\begin{tabular}[t]{c|cccc}
$J$&0&1&2&\dots\\
\hline 
\text{0}&$\beta_{0,0}(J)$&$\beta_{1,1}(J)$&$\beta_{2,2}(J)$&\dots\\
\text{1}&$\beta_{0,1}(J)$&$\beta_{1,2}(J)$&$\beta_{2,3}(J)$&\dots\\
\text{2}&$\beta_{0,2}(J)$&$\beta_{1,3}(J)$&$\beta_{2,4}(J)$&\dots\\
\text{\vdots}&\vdots&\vdots&\vdots&
\end{tabular}
\caption{Betti table of $J$}
\label{fig1}
\end{figure}

For $i\ge 0$ the module
\[
 \im(\partial_i)=\Ker(\partial_{i-1})
\]
is called the \emph{$i$-th syzygy module} of $J$. The \emph{projective dimension} and the \emph{Castelnuovo-Mumford regularity} (or \emph{regularity} for short) of $J$ are defined as 
\begin{align*}
\pd(J)&=\max\{i\mid \beta_{i,j}(J)\ne0\ \text{ for some }\ j\},\\
\reg(J)&=\max\{j\mid\beta_{i,i+j}(J)\ne0\ \text{ for some }\ i\}.
\end{align*}
Thus, $\pd(J)$ and $\reg(J)$, respectively, are the index of the last nonzero column and last nonzero row of the Betti table of $J$. Note that $\pd(J)$ and $\reg(J)$ can also be interpreted as 
\begin{align*}
\pd(J)&=\max\{i\mid \Ext_S^i(J,S)_j\ne0\ \text{ for some }\ j\},\\
\reg(J)&=\max\{j\mid\Ext_S^i(J,S)_{-i-j}\ne0\ \text{ for some }\ i\}.
\end{align*}
Moreover, by Hilbert's Syzygy Theorem (see, e.g., \cite[Theorem 15.2]{Pe11}) one always has
\[
 \pd(J)\le \dim (S)-1=m-1.
\]

\section{Castelnuovo-Mumford regularity}\label{sec4}

The regularity of powers of a graded ideal in a polynomial ring is eventually a linear function. This beautiful result was independently proven by Cutkosky-Herzog-Trung \cite{CHT} and Kodiyalam \cite{K}. A similar behavior is expected for the regularity of ideals in an $\Inc$-invariant chain, as proposed in \cite[Conjecture 1.1]{LNNR}:

\begin{conj}
	\label{conj}
	Let $\Icc=(I_n)_{n\ge 1}$ be a nonzero $\Inc$-invariant chain of graded ideals. Then $\reg(I_n)$ is eventually a linear function, that is, there exist integers $C$ and $D$ such that
	\[
	\reg(I_n)=Cn+D \quad \text{ whenever }\ n\gg 0.
	\]
\end{conj}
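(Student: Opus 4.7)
The plan is to prove \Cref{conj} by reducing to the monomial case and then analyzing the orbit structure of the chain through an inductive filtration argument, in the spirit of the proof of \Cref{Hilbert}.

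First I would reduce to $\Inc$-invariant chains of monomial ideals via \Cref{lem_initial_filtration}: the chain $(\ini_{\le}(I_n))_{n\ge 1}$ is $\Inc$-invariant for any monomial order $\le$ respecting $\Inc$, and the general inequality $\reg(I_n)\le \reg(\ini_{\le}(I_n))$ already gives one direction. For a matching lower bound one would pass (in characteristic zero) to an equivariant generic initial ideal, exploiting the classical fact that the generic initial ideal preserves regularity. Granted such a construction in the $\Inc$-equivariant setting, it suffices to treat chains of monomial ideals.

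Assume now that each $I_n$ is monomial. I would exploit the stabilization theorem \Cref{Noetherianity}(ii): for $n \ge r := \ind(\Icc)$ we have $I_n = \langle \Inc_{r,n}(G(I_r))\rangle$, where $G(I_r) = \{u_1, \dots, u_s\}$ is a fixed finite set of monomials. Setting $J_n = \langle \Inc_{r,n}(\{u_1,\dots,u_{s-1}\})\rangle$, the short exact sequence
\[
0 \to J_n \to I_n \to I_n/J_n \to 0
\]
together with the standard inequalities relating the regularities of the three terms would allow an induction on $s$, reducing the problem to the principal case $s=1$ and to the colon-type quotients $I_n/J_n$. For a principal chain $(\langle \Inc_n(u)\rangle)_{n \ge 1}$ generated by a single monomial $u \in R_r$, the ideal is generated by all ``$\Inc$-shifts'' of $u$ in $R_n$, and its minimal free resolution should admit an explicit combinatorial description via a suitable simplicial or cellular complex (for instance a Lyubeznik-type resolution refined by the shift combinatorics). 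Passing from $n$ to $n+1$ would add only boundedly many new generators and syzygies in each homological degree, forcing $\reg$ to grow linearly in $n$ with an explicit slope.

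The hard part is two-fold. First, the reduction from graded to monomial ideals only gives $\reg(I_n) \le \reg(\ini_\le(I_n))$, with strict inequality generically possible, so producing an $\Inc$-equivariant gin or arguing directly on the original chain is delicate. Second, and more seriously, the colon quotients $I_n/J_n$ are not themselves the $n$-th term of a tame $\Inc$-invariant chain of ideals; controlling their regularity \emph{uniformly in $n$} demands an equivariant analogue of the Rees-algebra finiteness underlying the Cutkosky-Herzog-Trung-Kodiyalam theorem for ordinary powers, and producing such a structure in the $\Inc$-setting is the central obstacle. I would attempt to address this by building a bivariate ``equivariant Betti generating function'' analogous to the Hilbert series of \Cref{Hilbert} and proving a rationality statement whose denominator forces linearity of $\reg(I_n)$, mirroring how the rational form in \Cref{Hilbert} forces the linearity of $\codim(I_n)$ in \Cref{codim}.
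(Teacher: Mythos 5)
You should first note that the statement you set out to prove is \Cref{conj}, which the paper records as an \emph{open conjecture} (from \cite[Conjecture 1.1]{LNNR}); the paper contains no proof of it, only an upper linear bound (\Cref{thm_bounding_reg_monomial}, \Cref{thm_bounding_reg}) and a handful of special cases (\Cref{c=1_reg} for $c=1$ and $\Sym$-invariant monomial chains, due to Murai and Raicu, plus the Artinian, single-monomial-orbit, and $c=1$ squarefree cases). So there is no ``paper proof'' to match, and what you wrote is a research plan rather than a proof; as it stands it does not establish the statement.

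The concrete gaps are the ones you partly acknowledge, and they are fatal rather than technical. First, the reduction to monomial chains only gives $\reg(I_n)\le\reg(\ini_{\le}(I_n))$, which at best reproves the known upper bound; your proposed fix via an $\Inc$-equivariant generic initial ideal does not exist in the literature and is genuinely problematic, because the generic coordinate changes in the different rings $R_n$ are not compatible with the maps $\Inc_{m,n}$, so there is no reason the chain $(\mathrm{gin}(I_n))_{n\ge1}$ is $\Inc$-invariant — and without invariance none of the stabilization machinery (\Cref{Noetherianity}) applies to it. Second, the induction on the number of generators via the sequence $0\to J_n\to I_n\to I_n/J_n\to 0$ only yields inequalities among regularities, and the quotients $I_n/J_n$ are not terms of an $\Inc$-invariant chain; you name this as ``the central obstacle'' but offer no mechanism to overcome it. Third, the claim that for a principal chain ``boundedly many new generators and syzygies in each homological degree forces $\reg$ to grow linearly'' is not a valid inference (regularity is not controlled by counting generators and syzygies), and even this principal monomial case required a genuine argument in \cite{GN,LNNR}. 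Finally, the hoped-for rational ``equivariant Betti generating function'' is speculation: the only rationality theorem available is \Cref{Hilbert}, and a Hilbert series does not determine regularity, which is precisely why the known partial results for $c=1$ (\Cref{c=1_reg}, \Cref{c=1_betti}) needed Murai's multigraded Tor analysis and Raicu's Ext computations rather than any Hilbert-series argument.
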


As evidence for this conjecture, a rather sharp upper linear bound for $\reg(I_n)$ is obtained in \cite{LNNR}. Additionally, some special cases of the conjecture are verified in \cite{LNNR,Mu,Ra}. This section is devoted to discussing these results.

Let us begin with an upper linear bound for $\reg(I_n)$. We first assume that $\Icc=(I_n)_{n\ge 1}$ is a nonzero $\Inc$-invariant chain of monomial ideals. As before, let $G(I_n)$ denote the minimal set of monomial generators of $I_n$. In order to bound $\reg(I_n)$, the following weights are introduced in \cite[Definition 3.2]{LNNR}:

\begin{defn}
	\label{def:i weight}
	Let $k\in [c]$. For a nonzero monomial $u\in R_n$ set
	\[\begin{aligned}
	w_k(u)&=
	\max\{e\mid \text{$x_{k,j}^e$ divides $u$ for some $j \ge 1$}\},\\
	w(u)&=\max\{w_k(u)\mid k\in [c]\}.
	\end{aligned}	
	\]
	Define the following weights for $I_n$:
		\[
		\begin{aligned}
		w_k(I_n) &= \max \{ w_k(u) \mid u \in G(I_n)\},\\
		\omega(I_n) &=
		\min \{ w(u) \mid u \in G(I_n)\}.
		\end{aligned}
		\]
\end{defn}

\begin{example}
\label{ex43}
Let $c= 3$ and consider the ideal $I_4=\langle u_1,\; u_2,\; u_3,\; u_4\rangle \subset R_4$ with
\[
u_1= x_{1,1}^2x_{2,1}^3x_{2,2},\ \ u_2=x_{1,3}^3x_{2,2}^4x_{3,2}^5,\ \ u_3=x_{3,1},\ \ u_4=x_{1,4}^2.
\]
Then we have that
\[
\begin{aligned}
  w_1(u_1)&=2,\ \ w_2(u_1)=3,\ \ w_3(u_1)=0, \ \ w(u_1)=3,\\
  w_1(u_2)&=3,\ \ w_2(u_2)=4,\ \ w_3(u_2)=5, \ \ w(u_2)=5,\\
  w_1(u_3)&=0,\ \ w_2(u_3)=0,\ \ w_3(u_3)=1, \ \ w(u_3)=1,\\
  w_1(u_4)&=2,\ \ w_2(u_4)=0,\ \ w_3(u_4)=0, \ \ w(u_4)=2.
\end{aligned}
\]
Thus,
\[
 w_1(I_4)=3,\ \ w_2(I_4)=4,\ \ w_3(I_4)=5, \ \ \omega(I_4)=1.
\]
\end{example}

Similarly to the function $\gamma$ in the previous section, the weights defined in \Cref{def:i weight} have the following stabilization property:
\[
 w_k(I_{n+1})= w_k(I_{n})\quad\text{and}\quad
	\omega(I_{n+1})= \omega(I_{n})\quad \text{for all }\ n\gg0.
\]
Indeed, let $r=\ind(\Icc)$ be the stability index of $\Icc$. Then for $n\ge r$ one has that
\[
 \langle G(I_{n+1})\rangle=I_{n+1}=\langle \Inc_{n,n+1}(I_n)\rangle=\langle \Inc_{n,n+1}(G(I_n))\rangle,
\]
giving
\[
 G(I_{n+1})\subseteq \Inc_{n,n+1}(G(I_n)).
\]
Using the latter inclusion one can show that (see \cite[Lemma 3.5, Remark 3.6]{LNNR})
\[
 w_k(I_{n+1})\le w_k(I_{n})\quad\text{and}\quad
	\omega(I_{n+1})= \omega(I_{n})
\]
for every $k\in [c]$ and $n\ge r$. Now since $w_k(I_{n})$ is a nonnegative integer, it must hold that $w_k(I_{n+1})= w_k(I_{n})$ for $n\gg0$. 

The stabilization of the weights $w_k$ and $\omega$ allows us to extend them to the chain $\Icc$ by setting
\[
\begin{aligned}
w_k(\Icc)&=w_k(I_{n})\quad \text{for }\ n\gg 0,\\
\omega(\Icc)&= \omega(I_{n})\quad \text{for }\ n\ge r.
\end{aligned}
\]

As the next example demonstrates, it can happen that $w_k(\Icc)\ne w_k(I_{r})$.
\begin{example}
 Consider again the ideal $I_4$ in \Cref{ex43} and let $\Icc=(I_n)_{n\ge 1}$ be an $\Inc$-invariant chain with $I_n=\langle\Inc_{4,n}(I_4)\rangle$ for all $n\ge 4$. Then $\ind(\Icc)=4$. Using induction on $n$ one can easily show that
 \[
  I_n=\langle x_{1,j}^2x_{2,j}^3 x_{2,k}\mid 1\le j<k\le n-2,\; j<4\rangle+\langle x_{3,j}\mid 1\le j\le n-3\rangle+\langle x_{1,j}^2\mid 4\le j\le n\rangle 
 \]
for all $n\ge 5.$ This yields
\[
 w_1(\Icc)=w_1(I_{n})=2, \  w_2(\Icc)=w_2(I_{n})=3, \  w_3(\Icc)=w_3(I_{n})=1, \ \omega(\Icc)=\omega(I_{n})=1
\]
for all $n\ge 5.$ So from \Cref{ex43} we see that $w_k(\Icc)\ne w_k(I_{4})$ for $k=1,2,3$.
\end{example}

Now we are ready to state an upper linear bound for $\reg(I_n)$ (see \cite[Theorem 4.1]{LNNR}):

\begin{thm}
	\label{thm_bounding_reg_monomial}
	Let $\Icc=(I_n)_{n\ge 1}$ be a nonzero $\Inc$-invariant chain of monomial ideals. Set 
	\[
	C(\Icc)=\max\{\omega(\Icc)-1,0\}+\max\Big\{\sum_{k\ne l}w_k(\Icc)\mid l\in[c]\Big\}. 
	\]
	Then there exists a constant $D(\Icc)$ such that
	\[
	\reg(I_n)\le C(\Icc)n+D(\Icc)\quad \text{for all }\ n\gg 0.
	\]
\end{thm}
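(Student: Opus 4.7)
The plan is to prove the inequality by induction on $n$, establishing that the passage from $n$ to $n+1$ increases the regularity by at most $C(\Icc)$ once we are in the stable regime; iterating from a suitable base case then yields the claimed linear bound with slope $C(\Icc)$. I would first choose $n \ge \ind(\Icc)$ large enough that the weights have stabilized, that is $w_k(I_n)=w_k(\Icc)$ for every $k \in [c]$ and $\omega(I_n)=\omega(\Icc)$. Then the identity $I_{n+1} = \langle \Inc_{n,n+1}(I_n)\rangle$ gives a completely combinatorial description of how the new generators arise from $I_n$ via the finitely many maps in $\Inc_{n,n+1}$.

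The main device will be the short exact sequence
\[
 0 \to R_{n+1}/(I_{n+1}:m)(-\delta) \to R_{n+1}/I_{n+1} \to R_{n+1}/(I_{n+1}, m) \to 0
\]
applied to the monomial $m = \prod_{k \ne l} x_{k,n+1}^{w_k(\Icc)}$ of degree $\delta = \sum_{k \ne l} w_k(\Icc)$, where $l \in [c]$ is chosen to attain $\max_l \sum_{k \ne l} w_k(\Icc)$. This yields the key inequality
\[
 \reg(I_{n+1}) \le \max\bigl\{\reg(I_{n+1}:m) + \delta,\ \reg(I_{n+1} + (m))\bigr\}.
\]
The choice of $m$ is dictated by two facts: first, the exponents $w_k(\Icc)$ are maximal in their classes, so the colon ideal $I_{n+1}:m$ has a highly controlled structure, with each relevant minimal generator being divided by its entire class-$k$ contribution at index $n+1$ for $k \ne l$; and second, adding $m$ effectively ``saturates'' the behaviour at index $n+1$ outside class $l$, reducing $I_{n+1}+(m)$ to an object comparable to $I_n$ together with the single extra variable $x_{l,n+1}$.

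From here the task is to bound each of the two ideals on the right by $\reg(I_n)$ plus the appropriate constant. For $I_{n+1}+(m)$, variable substitution and a direct comparison argument should yield $\reg(I_{n+1}+(m)) \le \reg(I_n) + O(1)$. For the colon $I_{n+1}:m$, the minimal-width generator of $I_r$ is what limits the improvement: after colon with $m$ its remaining class-$l$ part still carries a monomial of degree $\omega(\Icc)-1$ beyond what can be absorbed, giving $\reg(I_{n+1}:m) \le \reg(I_n) + \max\{\omega(\Icc)-1,0\}$. Combining these estimates produces $\reg(I_{n+1}) \le \reg(I_n) + C(\Icc)$, as required. The main obstacle will be that neither $I_{n+1}:m$ nor $I_{n+1}+(m)$ is itself a member of the chain $\Icc$, so one cannot directly invoke the inductive hypothesis on them; the technical heart of the argument is to construct auxiliary $\Inc$-invariant chains whose truncations dominate these ideals with controlled stability index, or equivalently to work with refined class-weighted invariants whose behaviour under colon and sum operations can be tracked explicitly. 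The correct choice of the class $l$ and of the monomial $m$ is precisely what forces the combinatorics to yield the increment $\max\{\omega(\Icc)-1,0\} + \delta = C(\Icc)$ and no more.
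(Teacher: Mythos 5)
Your overall strategy---induct on $n$ and control the increment $\reg(I_{n+1})-\reg(I_n)$ via a colon/sum short exact sequence with respect to the last column of variables---is indeed in the spirit of the actual proof (the survey does not prove the theorem; it is \cite[Theorem 4.1]{LNNR}, and the argument there iterates exact sequences of the form $0 \to R/(J:x_{k,n})(-1) \to R/J \to R/(J+\langle x_{k,n}\rangle) \to 0$ one variable at a time and runs an induction over a whole family of auxiliary invariant chains with controlled weight data). But as written your proposal has a genuine gap: the two estimates that carry all the weight, namely $\reg(I_{n+1}+(m)) \le \reg(I_n)+O(1)$ and $\reg(I_{n+1}:m) \le \reg(I_n)+\max\{\omega(\Icc)-1,0\}$, are asserted, not proved. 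Worse, your reduction is vacuous precisely in the hardest case $c=1$: there the product $m=\prod_{k\ne l}x_{k,n+1}^{w_k(\Icc)}$ is empty, so $m=1$, $\delta=0$, the exact sequence degenerates, and your ``colon estimate'' becomes literally the statement $\reg(I_{n+1})\le \reg(I_n)+\omega(\Icc)-1$, i.e.\ the theorem itself (this is the case where the bound is conjectured to be tight, so no soft argument can give it). For $c\ge 2$ the situation is no better in substance: $I_{n+1}:m$ still has minimal generators involving $x_{l,n+1}$ (up to exponent $w_l(\Icc)$) together with arbitrary contributions from columns $\le n$, and $I_{n+1}+(m)$ is not ``comparable to $I_n$'' in any way you have justified; bounding the regularity of either ideal is essentially a problem of the same type and difficulty as the original one.

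You acknowledge this yourself when you defer to ``auxiliary $\Inc$-invariant chains whose truncations dominate these ideals with controlled stability index'' --- but that construction, together with the verification that the associated weights $w_k$ and $\omega$ do not increase under the colon and sum operations and that the resulting induction terminates (with an Artinian-quotient case handled separately as a base case), is the technical heart of the proof in \cite{LNNR}, not a detail to be filled in later. Two further points would need attention even if those lemmas were granted: the constant hidden in your $\reg(I_{n+1}+(m))\le \reg(I_n)+O(1)$ must not exceed $C(\Icc)$, or the per-step increment fails to close the induction; and the per-step inequalities must be shown to hold uniformly for all $n$ beyond an explicit threshold (stability of the weights and of the index $\ind(\Icc)$ alone does not give this). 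So the proposal is a reasonable sketch of the known strategy, but it does not yet constitute a proof.
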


As illustrated by the following example, the bound provided in the previous theorem is rather sharp. Furthermore, it is predicted that the bound is tight in the case $c=1$; see the discussion below.

\begin{example}
 Let $m$ be a positive integer. Consider the $\Inc$-invariant chain $\Icc=(I_n)_{n\ge 1}$ given by $I_n = \langle \Inc_{1, n} (I_1) \rangle$ if $n \ge 1$ and 
 \[
  I_1=\langle x_{1,1}^m\cdots x_{c,1}^m\rangle.
 \]
Then one has
\[
 I_n=\langle x_{1,1}^m\cdots x_{c,1}^m,\dots,x_{1,n}^m\cdots x_{c,n}^m\rangle\quad\text{for }\ n\ge 1.
\]
Thus, it is evident that
\[
 w_1(\Icc)=\cdots= w_c(\Icc)= \omega(\Icc)=m.
\]
Hence,
\[
 C(\Icc)=\omega(\Icc)-1+\max\Big\{\sum_{k\ne l}w_k(\Icc)\mid l\in[c]\Big\}=cm-1.
\]
Note that
\[
 \reg(I_n)=(cm-1)n+1\quad\text{for all }\ n\ge 1,
\]
as the generators of $I_n$ form a regular sequence; see, e.g., \cite[Theorem 20.2]{Pe11}.
\end{example}

\Cref{thm_bounding_reg_monomial} can be immediately extended to any $\Sym$- and $\Inc$-invariant chain of graded ideals by virtue of \Cref{lem_initial_filtration} (see \cite[Corollaries 4.6, 4.7]{LNNR}).

\begin{cor}
	\label{thm_bounding_reg}
 Let $\Icc=(I_n)_{n\ge 1}$ be a nonzero $\Inc$-invariant chain of graded ideals. Let $\le$ be a monomial order on $R$ respecting $\Inc$. 
 Then there exists a constant $D(\Icc)$ such that
	\[
	\reg(I_n)\le C(\ini_{\le}(\Icc))n+D(\Icc)\quad \text{for all }\ n\gg 0.
	\]
In particular, the conclusion is true if $\Icc$ is a $\Sym$-invariant chain of graded ideals.
\end{cor}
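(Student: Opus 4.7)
The plan is to reduce immediately to the monomial case already handled by \Cref{thm_bounding_reg_monomial}. First, given the $\Inc$-invariant chain $\Icc=(I_n)_{n\ge 1}$ of graded ideals and a monomial order $\le$ respecting $\Inc$, I would invoke \Cref{lem_initial_filtration} to conclude that the chain of initial ideals $\ini_{\le}(\Icc)=(\ini_{\le}(I_n))_{n\ge 1}$ is itself an $\Inc$-invariant chain. Since each $\ini_{\le}(I_n)$ is a monomial ideal in $R_n$, \Cref{thm_bounding_reg_monomial} applies directly and supplies a constant $D(\Icc)$ (depending on $\Icc$ through its initial chain) such that
\[
\reg(\ini_{\le}(I_n))\le C(\ini_{\le}(\Icc))\,n+D(\Icc)\quad\text{for all }n\gg 0.
\]

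Next I would upgrade this bound from the initial ideals to the original ideals by the classical comparison $\reg(I_n)\le \reg(\ini_{\le}(I_n))$, which is valid for any graded ideal and any monomial order (a standard consequence of the fact that passing to an initial ideal can only enlarge graded Betti numbers, see, e.g., the upper semicontinuity of Betti numbers under Gröbner degenerations). Combining this inequality with the bound displayed above yields the desired linear estimate
\[
\reg(I_n)\le C(\ini_{\le}(\Icc))\,n+D(\Icc)\quad\text{for all }n\gg 0.
\]

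For the in-particular assertion, if $\Icc$ is a $\Sym$-invariant chain of graded ideals, then by \Cref{prop22}(ii) it is automatically an $\Inc$-invariant chain, so the general statement applies verbatim once a monomial order respecting $\Inc$ is fixed (for instance the lexicographic or reverse-lexicographic order induced by the ordering \eqref{order}).

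There is essentially no hard step here: the only subtlety is making sure that the monomial order we choose indeed respects $\Inc$ so that \Cref{lem_initial_filtration} is applicable, and that the constants appearing in \Cref{thm_bounding_reg_monomial} are legitimately defined for the chain $\ini_{\le}(\Icc)$. Both points follow directly from the preparatory material already established: the weights $w_k$ and $\omega$ stabilize along any $\Inc$-invariant chain of monomial ideals, so $C(\ini_{\le}(\Icc))$ is a well-defined integer, and the hypothesis on $\le$ was built into the setup of \Cref{lem_initial_filtration}. Thus the corollary is essentially a formal combination of \Cref{lem_initial_filtration}, \Cref{thm_bounding_reg_monomial}, \Cref{prop22}, and the Gröbner-deformation inequality $\reg(I_n)\le \reg(\ini_{\le}(I_n))$.
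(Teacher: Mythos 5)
Your proposal is correct and follows essentially the same route as the paper: pass to the chain of initial ideals via \Cref{lem_initial_filtration}, apply \Cref{thm_bounding_reg_monomial} to that monomial chain, and conclude with the standard inequality $\reg(I_n)\le\reg(\ini_{\le}(I_n))$, with the $\Sym$-case handled by \Cref{prop22}. No gaps.
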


Next, we discuss some special cases of \Cref{conj}. The following result summarizes several instances where \Cref{conj} holds true (see \cite[Propositions 4.14, 4.16, Corollary 6.5]{LNNR}).

\begin{thm}
 Let $\Icc=(I_n)_{n\ge 1}$ be a nonzero $\Inc$-invariant chain of graded ideals. Then \Cref{conj} is true in the following cases:
 \begin{enumerate}
  \item 
  $R_n/I_n$ is an Artinian ring for $n\gg0$.
  
  \item
  There exists an $\Inc$-invariant ideal $I\subseteq R$ such that $I_n=I\cap R_n$ for $n\gg0$, and
  \begin{enumerate}
   \item 
   either $I$ is generated by the $\Inc$-orbit of one monomial, i.e., there is a monomial $u\in R$ such that $I=\langle\Inc(u)\rangle$, or
   
   \item
 $c=1$ and $I$ is a squarefree monomial ideal.
  \end{enumerate}

 \end{enumerate}

\end{thm}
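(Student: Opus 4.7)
The plan is to treat the three cases separately, via different reductions. In all cases we may first invoke \Cref{lem_initial_filtration} to pass to the initial chain $\ini_{\le}(\Icc)$ with respect to a monomial order respecting $\Inc$; in general this only gives an upper bound on $\reg$, but in case (i) both Artinianity and the Hilbert series are preserved, so $\reg(I_n)=\reg(\ini_{\le}(I_n))$ and the reduction is lossless. In cases (ii)(a) and (ii)(b) the ideals are already monomial by hypothesis.

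For case (i), Artinianity forces $\codim(I_n)=cn$, so \Cref{codim} yields $\gamma(\Icc)=c$, and $[c]$ must be the unique minimum cover of $I_n$ for $n\gg0$. This, combined with $\Inc$-invariance, guarantees that each variable $x_{k,j}$ carries a pure power $x_{k,j}^{e_{k,j}}$ in $I_n$, with exponents controlled by the weights $w_k(\Icc)$. Consequently, the $K$-basis of standard monomials of $R_n/I_n$ lies inside a box whose maximum total degree is linear in $n$, and since $R_n/I_n$ is Artinian, $\reg(I_n)-1$ equals precisely this maximum standard degree. Combining the upper bound from \Cref{thm_bounding_reg_monomial} with an explicit periodic construction that realizes a top-degree standard monomial for all sufficiently large $n$ will then yield an eventually linear formula $\reg(I_n)=Cn+D$, with $C=\sum_{k=1}^c w_k(\Icc)$ and $D$ absorbing the bounded contribution from the small column indices where the pure powers may have larger exponents.

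Case (ii)(a), where $I=\langle\Inc(u)\rangle$ for a single monomial $u$, is handled by direct analysis of the orbit $\Inc_{|u|,n}(u)$, whose elements are indexed by strictly increasing maps from $\supp(u)$ into $[n]$; one can either extract $\reg(I_n)$ from the explicit Hilbert series formulas of \cite{GN}, or compute the Betti table directly via polarization to a squarefree monomial ideal with a well-understood combinatorial structure, in either case reading off eventual linearity from a geometric-series expansion. Case (ii)(b), where $c=1$ and $I$ is squarefree, is the most subtle: since $\omega(\Icc)=1$ and $c=1$ together force $C(\Icc)=0$, \Cref{thm_bounding_reg_monomial} gives only the constant upper bound $\reg(I_n)\le D(\Icc)$, so the task is to prove that regularity actually stabilizes. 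Here one writes $I_n$ as the Stanley--Reisner ideal of a $\Sym(n)$-invariant simplicial complex $\Delta_n$ generated by finitely many $\Sym$-orbits of faces, and applies Hochster's formula. The hardest step in the whole theorem — and the main obstacle — is to show that the reduced homology of the induced subcomplexes of $\Delta_n$ stabilizes as $n\to\infty$; this demands a careful combinatorial argument exploiting the $\Sym$-symmetry to reduce the computation to a finite number of subcomplex isomorphism types, so that $\reg(I_n)$ is eventually equal to the constant $D$.
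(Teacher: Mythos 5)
The survey itself contains no proof of this theorem; it only points to \cite[Propositions 4.14, 4.16, Corollary 6.5]{LNNR}, so your sketch has to stand on its own, and in each of the three cases it stops exactly where the real work begins; moreover it contains one concrete error. In case (i) the reduction to monomial ideals is indeed lossless (for an Artinian quotient the regularity is the top nonvanishing degree, hence determined by the Hilbert function, which passing to $\ini_{\le}$ preserves), but what follows is an announcement, not an argument: the box spanned by the pure powers only bounds the maximal standard degree from above, and the asserted slope $C=\sum_{k=1}^{c}w_k(\Icc)$ is false --- it even exceeds the bound of \Cref{thm_bounding_reg_monomial}. For instance, for $c=1$ and $I_n=\langle x_1^m,\dots,x_n^m\rangle$ one has $\reg(I_n)=(m-1)n+1$ while $\sum_k w_k(\Icc)=m$; and for the Artinian $\Sym$-invariant chain generated by the orbits of $x_1^3$ and $x_1^2x_2^2$ the largest standard monomial is $x_1^2x_2\cdots x_n$, so the slope is $1=\omega(\Icc)-1$, not $3$, and not what the box of pure powers predicts. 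The entire difficulty in (i) is to show that the maximal degree of a standard monomial is \emph{exactly} linear for $n\gg0$; the ``explicit periodic construction'' that is supposed to achieve this is never exhibited, and with the wrong slope it cannot exist.

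In case (ii)(a) the first route is invalid as stated: regularity is not determined by the Hilbert series in general, so the formulas of \cite{GN} only become usable once one knows that $R_n/I_n$ is Cohen--Macaulay for chains generated by one monomial orbit (this is true, see \cite[Corollary 2.2]{GN}, but you neither state nor use it), and the polarization alternative is not developed beyond the phrase ``reading off eventual linearity from a geometric-series expansion''. In case (ii)(b) you correctly note that $\omega(\Icc)=1$ and $c=1$ give the constant upper bound $\reg(I_n)\le D(\Icc)$ via \Cref{thm_bounding_reg_monomial}, but you explicitly defer the key point (stabilization), so nothing is proved; note also that the ``hardest step'' you describe can be bypassed: since $I_n=I\cap R_n$ for $n\gg0$ and $I$ is squarefree with $c=1$, the Stanley--Reisner complex of $I_n$ is the induced subcomplex of that of $I_{n+1}$ on the first $n$ vertices, so Hochster's formula gives $\reg(I_n)\le\reg(I_{n+1})$, and a nondecreasing sequence bounded above by a constant is eventually constant. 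In summary, the proposal is a plausible plan, but the decisive steps in all three cases are missing, and the slope claimed in case (i) contradicts both \Cref{thm_bounding_reg_monomial} and simple examples.
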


In the remainder of this section we consider the case $c=1$, i.e., $R$ has only one row of variables. For simplicity, we will write the variables of $R$ as $x_i,\ i\in\N$. It is apparent that if $\Icc=(I_n)_{n\ge 1}$ is a nonzero $\Inc$-invariant chain of proper monomial ideals, then $C(\Icc)=\omega(\Icc)-1$. So \Cref{thm_bounding_reg_monomial} yields the following bound:
\[
	\reg(I_n)\le (\omega(\Icc)-1)n+D(\Icc)\quad \text{for }\ n\gg 0,
\]
where $D(\Icc)$ is a suitable constant (see \cite[Corollary 4.8]{LNNR}). Based on computational experiments it is conjectured in \cite[Conjecture 4.12]{LNNR} that this bound is tight:

\begin{conj}
 Let $c=1$ and let $\Icc=(I_n)_{n\ge 1}$ be a nonzero $\Inc$-invariant chain of proper monomial ideals. Then there exists a constant $D(\Icc)$ such that
\[
	\reg(I_n)= (\omega(\Icc)-1)n+D(\Icc)\quad \text{for all }\ n\gg 0.
\]
\end{conj}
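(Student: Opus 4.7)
Since $c=1$, the quantity $\sum_{k\ne l} w_k(\Icc)$ appearing in \Cref{thm_bounding_reg_monomial} is empty, so the upper linear bound $\reg(I_n)\le(\omega(\Icc)-1)n+D(\Icc)$ is already in hand. Writing $m:=\omega(\Icc)$, the task is to establish a matching lower bound $\reg(I_n)\ge(m-1)n+D'(\Icc)$ and then to upgrade matching asymptotic slopes to eventual exact linearity. The plan starts by fixing the stability index $r=\ind(\Icc)$ and a witness generator $u^*\in G(I_r)$ realizing $w(u^*)=m$; after reindexing, $u^*=x_j^m\cdot v$ where the factor $x_j^m$ achieves the maximal exponent. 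Since $\Inc_{r,n}$ acts by order-preserving relabelings, I can select, for $n$ large, $k\in\Theta(n)$ elements $\pi_1,\dots,\pi_k\in\Inc_{r,n}$ whose images of $\supp(u^*)$ are pairwise disjoint, producing monomials $u_s:=\pi_s(u^*)\in G(I_n)$ of the same shape as $u^*$, each divisible by a pure $m$-th power of a distinct variable.

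The core of the argument is to convert this disjoint-orbit configuration into a non-vanishing multigraded Betti number of $I_n$ of internal degree at least $k(m-1)+O(1)=(m-1)n+O(1)$. My route would be polarization: since $\reg(I_n)=\reg(I_n^{\mathrm{pol}})$, it suffices to exhibit a non-vanishing reduced cohomology class on a suitable restricted subcomplex of the Stanley--Reisner complex $\Delta_n$ associated to $I_n^{\mathrm{pol}}$, via Hochster's formula. The disjointness of the $u_s$'s means that their polarizations occupy $k$ mutually disjoint blocks of variables; in each block the polarized pure-power factor $x_{\pi_s(j)}^m$ contributes a local reduced cohomology class whose internal degree exceeds its homological degree by exactly $m-1$. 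A simplicial-join/K\"{u}nneth-style argument across the $k$ disjoint blocks then multiplies these local classes into a single non-vanishing class on their union, of internal degree $k(m-1)+O(1)$, giving the sought lower bound.

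The principal obstacle will be the \emph{separation step}: ensuring that minimal generators of $I_n$ coming from the other $\Inc$-orbits generating $I=\bigcup_n I_n$ do not destroy the local Betti contribution inside any of the $k$ blocks. By $\Inc$-Noetherianity (\Cref{Noetherianity}), $I$ is generated by the $\Inc$-orbits of finitely many monomials, each with support of bounded width. Spacing the $\pi_s$'s further apart than this common width would guarantee that no minimal generator of $I_n$ other than $u_s$ has support entirely contained in the $s$-th block, so the local combinatorial model around each block is unchanged and the computed Betti contribution survives. Finally, to lift the matching asymptotic bounds to eventually exact linearity, the plan is to combine them with the variable-elimination short exact sequence
\[
0\to (R_{n+1}/(I_{n+1}:x_{n+1}))(-1)\to R_{n+1}/I_{n+1}\to R_{n+1}/(I_{n+1}+(x_{n+1}))\to 0,
\]
together with the identification $R_{n+1}/(I_{n+1}+(x_{n+1}))\cong R_n/I_n$ (valid for $n\ge r$ after the standard reduction to $I_n=I\cap R_n$); the integer-valued difference $\reg(R_n/I_n)-(m-1)n$ is then trapped in a bounded interval, and an inductive comparison via this sequence should force it to be eventually constant.
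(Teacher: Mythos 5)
The statement you set out to prove is not a theorem of this paper but an open conjecture (\cite[Conjecture 4.12]{LNNR}, reproduced here); the survey offers no proof, only the evidence that the $\Sym$-invariant monomial case was settled independently by Murai \cite{Mu} and Raicu \cite{Ra} (\Cref{c=1_reg}) by methods quite different from yours (Murai determines the multigraded Betti numbers via simplicial homology and the line-segment structure of Betti tables, Raicu computes the relevant Ext modules). Your first step is correct and is exactly \cite[Corollary 4.8]{LNNR}: for $c=1$ one has $C(\Icc)=\omega(\Icc)-1$ in \Cref{thm_bounding_reg_monomial}, so the upper bound is in hand. Everything therefore rests on your lower bound, and that is where the proposal has genuine gaps.

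First, the arithmetic of the slope does not work out: the supports of $u_s=\pi_s(u^*)$ are pairwise disjoint, so $k\le n/\#\supp(u^*)$, and the extra spacing demanded by your separation step shrinks $k$ further; granting your claimed excess of $m-1$ per block you reach internal degree about $(m-1)k$, i.e.\ slope $(m-1)k/n<m-1$ whenever $u^*$ has width at least two, so the asserted identity $k(m-1)+O(1)=(m-1)n+O(1)$ is unavailable. Second, the K\"unneth/join step fails: with $W$ the union of the blocks, Hochster's formula for $\Delta_n|_W$ sees every minimal generator of $I_n^{\mathrm{pol}}$ whose support lies in $W$, in particular $\Inc$-images of width-two-or-more generators with one variable in block $s$ and another in block $t\ne s$; your spacing only excludes generators contained in a \emph{single} block, not these straddling ones, and they cannot be ignored. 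For the chain generated by $x_1^3x_2^3$ ($m=3$), the join of disjoint principal blocks would force regularity growth of order $\tfrac{5}{2}n$, contradicting the valid upper bound $2n+O(1)$; so either you take the full local contribution and overshoot (proving the join decomposition is false), or you take only the pure-power excess $m-1$ and undershoot with slope $(m-1)/\#\supp(u^*)$ --- in neither case do you get slope $m-1$. In the known proofs the extremal multidegrees have exponent $m-1$ in almost every variable (for $I_n=\langle x_1^m,\dots,x_n^m\rangle$ the witness is $\beta_{n-1,nm}(I_n)\ne0$), not disjoint translates of one generator; this structural input is what your construction lacks. Third, the concluding upgrade is not a proof: an integer-valued defect $\reg(I_n)-(m-1)n$ trapped in a bounded interval need not stabilize, and your short exact sequence only yields inequalities involving the colon chain $(I_{n+1}:x_{n+1})$, whose regularity you have not controlled; without a monotonicity statement for the defect --- which is essentially what the exact formula $\reg(I_n)=(m-1)n+\reg(I_r:\alpha)$, $\alpha=(x_1\cdots x_r)^{m-1}$, of \Cref{c=1_reg} provides in the $\Sym$-invariant case --- the squeeze does not close. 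Note also that a complete argument along any lines would have to cover general $\Inc$-invariant chains, which is precisely the part of the conjecture that remains open.
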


This conjecture is a special case of \Cref{conj} with a precise description of the slope of the linear function. Recently, using different approaches Murai \cite{Mu} and Raicu \cite{Ra} have independently verified this conjecture for $\Sym$-invariant chains of monomial ideals. Note that if $\Icc=(I_n)_{n\ge 1}$ is a $\Sym$-invariant chain of monomial ideals, then by \Cref{Noetherianity} there exist monomials $u_1,\dots,u_m\in R$ such that
\[
 I_n=\langle\sigma(u_i)\mid 1\le i\le m,\ \sigma\in \Sym(n)\rangle\quad \text{for }\ n\gg 0.
\]
Evidently, we can choose each $u_i$ of the form $u_i=x_1^{a_{i,1}}x_2^{a_{i,2}}\cdots x_{k_i}^{a_{i,k_i}}$ with
\[
 \ba_i=(a_{i,1},\dots,a_{i,k_i})\in\N^{k_i} \quad\text{and}\quad a_{i,1}\ge a_{i,2}\ge\cdots\ge a_{i,k_i}.
\]
Such an $\ba_i$ is called a \emph{partition of length $k_i$}. Setting $r=\max\{k_i\mid 1\le i\le m\}$ we see that $I_n$ is generated by the $\Sym(n)$-orbits of $u_1,\dots,u_m$ for all $n\ge r$. For brevity, we also say that the chain $\Icc$ is generated by the partitions $\ba_1,\dots,\ba_m$. Notice that $w(u_i)=a_{i,1}$ and
\[
 \omega(\Icc)=\min\{w(u_i)\mid 1\le i\le m\}=\min\{a_{i,1}\mid 1\le i\le m\}.
\]

Using the above notation, the result of Murai \cite[Proposition 3.9]{Mu} and Raicu \cite[Theorem 6.1]{Ra} can be stated as follows:

\begin{thm}
\label{c=1_reg}
  Let $c=1$ and let $\Icc=(I_n)_{n\ge 1}$ be a nonzero $\Sym$-invariant chain of proper monomial ideals. Assume that $\Icc$ is generated by the partitions $\ba_1,\dots,\ba_m$ of length at most $r$. Set $\alpha=(x_1\cdots x_r)^{\omega(\Icc)-1}$. Then one has
  \[
   \reg(I_n)= (\omega(\Icc)-1)n+\reg(I_r:\alpha)\quad  \text{for }\ n\gg 0.
  \]
\end{thm}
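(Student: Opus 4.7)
The plan is to go beyond the linear upper bound of \Cref{thm_bounding_reg_monomial} (which for $c=1$ reads $\reg(I_n)\le(\omega(\Icc)-1)n+D(\Icc)$) by pinning down the precise additive constant and producing a matching lower bound. The main tool is the short exact sequence
\[
0\longrightarrow (R_n/(I_n:\alpha))(-|\alpha|)\xrightarrow{\,\cdot\alpha\,} R_n/I_n\longrightarrow R_n/(I_n+\langle\alpha\rangle)\longrightarrow 0,
\]
which yields $\reg(R_n/I_n)\le\max\{\reg(R_n/(I_n:\alpha))+|\alpha|,\;\reg(R_n/(I_n+\langle\alpha\rangle))\}$. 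Since $|\alpha|=r(\omega(\Icc)-1)$, the asserted equality follows once the first term equals $(\omega(\Icc)-1)n+\reg(I_r:\alpha)-1$, the second term is strictly smaller, and a matching nonvanishing Betti number is exhibited.

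First I would analyze $I_n:\alpha$. Every orbit generator $\sigma(u_i)=x_{\sigma(1)}^{a_{i,1}}\cdots x_{\sigma(k_i)}^{a_{i,k_i}}$ of $I_n$ satisfies $a_{i,1}\ge\omega(\Icc)$, so a case analysis of $\gcd(\sigma(u_i),\alpha)$ according to how $\sigma(\{1,\dots,k_i\})$ meets $\{1,\dots,r\}$ shows that each $x_1,\dots,x_r$ lies in $I_n:\alpha$, and that the image of $I_n:\alpha$ in $K[x_{r+1},\dots,x_n]$ carries a $\Sym(\{r+1,\dots,n\})$-invariant structure determined by the original chain. Using this, I would establish by induction on $n-r$, leveraging the stabilization in \Cref{Noetherianity}(ii), the key identity
\[
\reg(I_n:\alpha)=(\omega(\Icc)-1)(n-r)+\reg(I_r:\alpha)\qquad\text{for }n\gg 0,
\]
which produces exactly $(\omega(\Icc)-1)n+\reg(I_r:\alpha)-1$ as the first entry of the maximum.

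Next I would bound $\reg(R_n/(I_n+\langle\alpha\rangle))$ strictly below this value. Iterating the short exact sequence with carefully chosen monomial divisors of $\alpha$, each step strips one variable from $\alpha$ and reduces to an auxiliary ideal whose $\omega$-invariant, after passing to its $\Sym$-closure, drops by one; feeding these auxiliary chains into \Cref{thm_bounding_reg_monomial} yields a linear bound of slope at most $\omega(\Icc)-2$, which is asymptotically subdominant. For the matching lower bound on $\reg(I_n)$, I would exhibit an explicit nonvanishing graded Betti number at the predicted position using Hochster's formula: the required multidegree supported on all $n$ variables is produced inductively by lifting a known nonvanishing Tor-class for $I_{n-1}:\alpha$ through the multiplication-by-$\alpha$ injection in the tail of the long exact sequence.

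The main obstacle is the fine structural control of $I_n:\alpha$ after quotienting by $x_1,\dots,x_r$: the orbits of generators whose support only partially overlaps $\{1,\dots,r\}$ produce genuinely new residual monomials in $K[x_{r+1},\dots,x_n]$ that are not simply the natural chain restricted to those variables. Handling these residuals, together with the fact that $I_n+\langle\alpha\rangle$ is not itself $\Sym(n)$-invariant so the toolkit of \Cref{thm_bounding_reg_monomial} does not apply without modification, is the delicate point; it is precisely at this step that the independent arguments of Murai (via combinatorics of Alexander duality) and Raicu (via equivariant local cohomology in the polynomial ring in infinitely many variables) diverge into genuinely different technical routes.
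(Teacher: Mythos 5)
Your proposal has genuine gaps, and two of its load-bearing claims are false. First, the structural claim that $x_1,\dots,x_r\in I_n:\alpha$ fails: for the chain of \Cref{ex411}, generated by $(4,1)$ and $(3,3)$, one has $\omega(\Icc)=3$, $\alpha=x_1^2x_2^2$, and $x_1\alpha=x_1^3x_2^2$ is divisible by no $\sigma(x_1^4x_2)$ or $\sigma(x_1^3x_2^3)$, so $x_1\notin I_n:\alpha$ for any $n$ (indeed $I_2:\alpha=\langle x_1^2,x_2^2,x_1x_2\rangle$). Hence the reduction of $I_n:\alpha$ to an invariant ideal in $K[x_{r+1},\dots,x_n]$ collapses, and with it your ``key identity'' $\reg(I_n:\alpha)=(\omega(\Icc)-1)(n-r)+\reg(I_r:\alpha)$. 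That identity is essentially equivalent to the theorem itself, for a chain $(I_n:\alpha)_n$ which is no longer $\Sym$-invariant (only invariant under permutations fixing $[r]$), and ``induction on $n-r$ leveraging \Cref{Noetherianity}(ii)'' cannot deliver it: stabilization controls generators, not regularity. This is exactly where the real work lies, and it is what Murai (determination of the multigraded Betti numbers $\Tor_i(I_n,K)$ via simplicial homology, giving \Cref{c=1_betti}) and Raicu (description of the modules $\Ext^j_{R_n}(R_n/I_n,R_n)$) actually prove; the survey itself contains no proof and simply cites them. Second, your claim that $\reg\bigl(R_n/(I_n+\langle\alpha\rangle)\bigr)$ is eventually bounded by a linear function of slope at most $\omega(\Icc)-2$ is also false: for the chain generated by the single partition $(\omega)$ with $\omega\ge 2$, one gets $I_n+\langle\alpha\rangle=\langle x_1^{\omega-1},x_2^{\omega},\dots,x_n^{\omega}\rangle$, a complete intersection with $\reg\bigl(R_n/(I_n+\langle\alpha\rangle)\bigr)=(\omega-1)n-1$, i.e.\ the same slope $\omega-1$ as the colon branch. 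So the cokernel term is not asymptotically subdominant; both branches of your maximum can grow with identical slope and differ only in the constant, which your sketch does not control (and $I_n+\langle\alpha\rangle$ is not $\Sym(n)$-invariant, so feeding its ``$\Sym$-closure'' into \Cref{thm_bounding_reg_monomial} bounds the wrong ideal).

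The lower bound is likewise unsubstantiated: Hochster's formula applies to squarefree monomial ideals, so you would need its multigraded (upper-Koszul) variant or polarization, and the proposed lifting of a nonvanishing Tor class through the long exact sequence of multiplication by $\alpha$ requires precisely the injectivity/nonvanishing statements that are at stake. In short, the short-exact-sequence framework and the arithmetic $|\alpha|=r(\omega(\Icc)-1)$ are consistent with the asserted formula, but every step that would carry the proof---the structure of $I_n:\alpha$, the linear growth of its regularity with the correct constant, the subdominance of the $I_n+\langle\alpha\rangle$ branch, and the exhibited extremal Betti number---is either false as stated or assumed rather than proved.
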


It should be noted that Murai and Raicu in fact prove stronger results: Murai \cite[Theorem 1.1]{Mu} obtains the asymptotic behavior of the Betti table of $I_n$ (see \Cref{sec6}), whereas Raicu \cite[Theorem 3.1]{Ra} provides a description of the graded components of the Ext modules $\Ext^j_{R_n}(R_n/I_n,R_n)$ for $j\ge 0$.

\begin{example}
\label{ex411}
 Let $\Icc=(I_n)_{n\ge 1}$ be the $\Sym$-invariant chain of ideals generated by the two partitions $(4,1)$ and $(3,3)$. Then $I_1=\langle0\rangle$ and $I_n$ is generated by the $\Sym(n)$-orbits of $x_1^4x_2,\; x_1^3x_2^3$ for $n\ge 2$. For instance,
 $$
 \begin{aligned}
  I_2&=\langle x_1^4x_2,\; x_1x_2^4,\; x_1^3x_2^3\rangle,\\
  I_3&=\langle x_1^4x_2,\;x_1^4x_3,\;x_1x_2^4,\;x_1x_3^4,\;x_2^4x_3,\;x_2x_3^4,\; x_1^3x_2^3,\;  x_1^3x_3^3,\; x_2^3x_3^3\rangle.
 \end{aligned}
 $$
 We have that $\omega(\Icc)=\min\{4,3\}=3$ and $\alpha=(x_1x_2)^{\omega(\Icc)-1}=x_1^2x_2^2$. Thus,
 \[
  I_2:\alpha=\langle x_1^2,\; x_2^2,\; x_1x_2\rangle
 \]
with regularity $\reg(I_2:\alpha)=2$. So by \Cref{c=1_reg},
\[
 \reg(I_n)= 2n+2\quad  \text{for }\ n\gg 0.
\]
\end{example}

\section{Projective dimension}\label{sec5}

 Analogously to the codimension and Castelnuovo-Mumford regularity, the projective dimension is also expected to grow eventually linearly along $\Inc$-invariant chains of ideals, as proposed in \cite[Conjecture 1.1]{LNNR2}:

\begin{conj}
\label{conj_pd}
 Let $(I_n)_{n\ge 1}$ be an  $\Inc$-invariant chain of ideals. Then $\pd(I_n)$ is eventually a linear function, that is, there exist integers $E$ and $F$ such that
\[
\pd (I_n) =En+F \quad  \text{ whenever }\ n\gg 0.
\]
\end{conj}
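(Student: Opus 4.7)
The plan is to translate the conjecture into a statement about depth via the Auslander--Buchsbaum formula. Since $R_n$ is Cohen--Macaulay of dimension $cn$, one has
\[
 \pd(I_n)=\pd(R_n/I_n)-1=cn-1-\operatorname{depth}(R_n/I_n),
\]
so Conjecture~\ref{conj_pd} is equivalent to the assertion that $\operatorname{depth}(R_n/I_n)$ is eventually linear in $n$. This reframing is attractive because \Cref{dim} already controls $\dim(R_n/I_n)$ linearly, and the gap between depth and dimension is the only obstruction.

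A linear lower bound on $\pd(I_n)$ comes almost for free. Combining $\operatorname{depth}(R_n/I_n)\le \dim(R_n/I_n)=cn-\codim(I_n)$ with \Cref{dim} yields
\[
 \pd(I_n)\ge \codim(I_n)-1=\gamma(\Icc)\,n+B-1 \quad\text{for }n\gg 0.
\]
Hence the conjectural slope $E$ satisfies $E\ge \gamma(\Icc)$, with equality as soon as $R_n/I_n$ is eventually Cohen--Macaulay; in that case the conjecture follows.

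For the upper bound I would pass to monomial initial ideals via \Cref{lem_initial_filtration}: since taking initial ideals never decreases projective dimension, $\pd(I_n)\le \pd(\ini_{\le}(I_n))$, so it suffices to prove an upper linear bound in the monomial case. For a monomial $\Inc$-invariant chain, the decisive input is the stabilization in \Cref{Noetherianity}, which yields $G(I_n)\subseteq \Inc_{r,n}(G(I_r))$ for $n\ge r=\ind(\Icc)$. I would try to exploit this to construct an equivariant Taylor- or Lyubeznik-type resolution whose combinatorial support is determined, modulo the $\Inc$-action, by finitely much data at level $r$, so that the length of the resolution grows linearly in $n$. Murai's analysis of the full asymptotic Betti table in the $c=1$ symmetric case, which already implies \Cref{c=1_reg}, is a concrete template for such an argument.

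The main obstacle will be closing the gap between the easy lower slope $\gamma(\Icc)$ and whatever upper slope the monomial case produces; equivalently, one must control $\dim(R_n/I_n)-\operatorname{depth}(R_n/I_n)$ asymptotically when $R_n/I_n$ is not Cohen--Macaulay. I expect this will require either a genuinely equivariant local cohomology or FI-module framework, or an adaptation of Raicu's description of the Ext modules $\Ext^j_{R_n}(R_n/I_n,R_n)$ from $c=1$ to higher $c$. A reasonable intermediate target would be to verify the conjecture in exactly those settings where the analogous regularity conjecture is already known: Artinian quotients (trivially, with $E=c$), chains generated by a single $\Inc$-orbit of a monomial, and the $c=1$ squarefree case.
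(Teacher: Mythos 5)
The statement you are trying to prove is \Cref{conj_pd}, which is an \emph{open conjecture}: the paper offers no proof of it, only partial evidence, namely the Cohen--Macaulay case (\Cref{CM}), the linear bounds $\gamma(\Icc)n+B\le\pd(I_n)\le cn-1$ of \Cref{pd both bounds}, and Murai's theorem settling the case $c=1$ for $\Sym$-invariant chains of monomial ideals (\Cref{c=1_pd}), which rests on the stabilization of the whole Betti table (\Cref{c=1_betti}). Your proposal essentially reconstructs this known evidence --- the Auslander--Buchsbaum reformulation in terms of depth, the lower bound $\pd(I_n)\ge\codim(I_n)-1=\gamma(\Icc)n+B-1$ from \Cref{dim}, the Cohen--Macaulay and Artinian cases, and the reduction to monomial ideals via initial ideals (the inequality $\pd(I_n)\le\pd(\ini_\le(I_n))$ is indeed correct) --- and then explicitly defers the remaining difficulty, so it is a research plan rather than a proof.

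Two concrete gaps. First, the proposed equivariant Taylor- or Lyubeznik-type resolution cannot produce a useful linear upper bound: the length of the Taylor complex equals the number of minimal generators of $I_n$, and along an $\Inc$-invariant chain this number typically grows polynomially of degree greater than one (already the $\Sym$-orbit of $x_{1}x_{2}$ for $c=1$ has on the order of $n^2$ generators), so this route yields nothing beyond the trivial Hilbert-syzygy bound $cn-1$ that \Cref{pd both bounds} already records. Second, and more fundamentally, sandwiching $\pd(I_n)$ between two linear functions does not establish eventual exact linearity: your lower bound has slope $\gamma(\Icc)$ and your upper bound slope $c$, and these generally differ; even if they coincided, one would only know $\pd(I_n)$ up to a bounded error. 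What is needed is a stabilization statement for $\operatorname{depth}(R_n/I_n)$, or equivalently for the position of the last nonzero column of the Betti table --- exactly what \Cref{c=1_betti} supplies when $c=1$ for $\Sym$-invariant monomial chains, and what remains open for $c\ge 2$ and for general $\Inc$-invariant chains. Note that \Cref{betti} does not help here: it stabilizes each \emph{fixed} column $p$ as $n\to\infty$, but the column of interest, $p=\pd(I_n)$, moves with $n$. Your intermediate targets are reasonable, but no step of the proposal closes the gap that the paper deliberately leaves as a conjecture.
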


This section is devoted to discussing some evidence for this conjecture. First,  recall that one has
\begin{equation}
 \label{eq51}
 \pd (I_n) \ge \codim (I_n)-1,
\end{equation}
with equality if $I_n$ is a Cohen-Macaulay ideal. So \Cref{dim} immediately gives the following (see \cite[Proposition 4.1]{LNNR2}):

\begin{prop}
 \label{CM}
 Let $\Icc=(I_n)_{n\ge 1}$ be an $\Inc$-invariant chain of ideals such that $I_n$ is Cohen-Macaulay for all $n\gg0$. Then \Cref{conj_pd} is true for $\Icc$.
\end{prop}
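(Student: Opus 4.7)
The plan is to combine the two ingredients that are already in hand: the equality case of the Auslander--Buchsbaum type bound \eqref{eq51} and the eventual linear growth of the codimension from \Cref{dim}.

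First I would observe that for each $n\gg 0$, the hypothesis that $R_n/I_n$ is Cohen--Macaulay turns \eqref{eq51} into an equality, giving
\[
\pd(I_n)=\codim(I_n)-1.
\]
This is the standard consequence of the Auslander--Buchsbaum formula applied to $R_n/I_n$ over the (finite-dimensional) polynomial ring $R_n$: a Cohen--Macaulay quotient has depth equal to dimension, hence $\pd(R_n/I_n)=\codim(I_n)$, and then $\pd(I_n)=\pd(R_n/I_n)-1$.

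Next I would invoke \Cref{dim} applied to the $\Inc$-invariant chain $\Icc=(I_n)_{n\ge 1}$, using any monomial order $\le$ on $R$ that respects $\Inc$ (for instance the lexicographic order induced by \eqref{order}). This yields an integer $B$ such that
\[
\codim(I_n)=\gamma(\Icc)\,n+B \qquad \text{for all } n\gg 0.
\]
Combining this with the previous display gives
\[
\pd(I_n)=\gamma(\Icc)\,n+(B-1) \qquad \text{for all } n\gg 0,
\]
which is exactly the linear function asserted by \Cref{conj_pd}, with $E=\gamma(\Icc)$ and $F=B-1$. Since both facts used are already established earlier in the paper, there is no real obstacle; the only point to check is that, once $n$ is large enough to simultaneously satisfy the Cohen--Macaulayness assumption and the conclusion of \Cref{dim}, the two formulas combine on the same range.
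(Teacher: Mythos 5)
Your proposal is correct and is essentially the paper's own argument: the paper derives \Cref{CM} by noting that the inequality \eqref{eq51} becomes an equality $\pd(I_n)=\codim(I_n)-1$ when $I_n$ is Cohen--Macaulay, and then applying \Cref{dim} to get $\codim(I_n)=\gamma(\Icc)n+B$ for $n\gg0$, exactly as you do. Your additional remark that the equality comes from the Auslander--Buchsbaum formula, and that the two ranges of validity must be intersected, only spells out what the paper leaves implicit.
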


Note that the ideal $I_n$ generated by the $p$-minors of a $c\times n$ generic matrix is a Cohen-Macaulay ideal; see, e.g., \cite[Theorem 7.3.1]{BH}. So \Cref{CM} is applicable to the chain $\Icc=(I_n)_{n\ge 1}$ considered in \Cref{ex39}. It is also applicable to any chain that is generated by one monomial orbit; see \cite[Corollary 2.2]{GN}.

 Further evidence for \Cref{conj_pd} is given by the next result, which provides upper and lower linear  bounds for $\pd (I_n)$ (see \cite[Propositions 4.3, 4.13]{LNNR2}):

\begin{prop}
 \label{pd both bounds}
 Let $\Icc=(I_n)_{n\ge 1}$ be an $\Inc$-invariant chain of proper ideals. Then there exists an integer $B$ such that
\[
  \gamma (\Icc) n+B \le \pd (I_n) \le cn-1\quad\text{for }\ n\gg 0. 
\]
 In particular, if $c=1$, then there is a positive integer $B'$ such that
 \[
   n-B' \le \pd (I_n) \le n-1\quad\text{for }\ n\gg0. 
\]
\end{prop}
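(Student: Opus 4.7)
The plan is to handle the two inequalities separately. The upper bound is a direct consequence of Hilbert's Syzygy Theorem, while the lower bound follows by combining the inequality \eqref{eq51} with the linearity of the codimension established in \Cref{dim}. The sharpening in the case $c = 1$ is then a short observation about the value of $\gamma(\Icc)$.

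For the upper bound $\pd(I_n) \le cn - 1$, I would note that $R_n$ is a polynomial ring in $cn$ variables, so Hilbert's Syzygy Theorem applied to the finitely generated graded module $R_n/I_n$ gives $\pd_{R_n}(R_n/I_n) \le cn$. The short exact sequence $0 \to I_n \to R_n \to R_n/I_n \to 0$ together with the projectivity of $R_n$ yields $\pd(I_n) = \pd(R_n/I_n) - 1 \le cn - 1$, settling the upper estimate whenever $I_n \neq 0$.

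For the lower bound, I would invoke the inequality $\pd(I_n) \ge \codim(I_n) - 1$ from \eqref{eq51}. Then \Cref{dim} supplies an integer $B_0$ such that $\codim(I_n) = \gamma(\Icc)n + B_0$ for all $n \gg 0$, so setting $B = B_0 - 1$ yields $\pd(I_n) \ge \gamma(\Icc)n + B$ for $n \gg 0$, as required.

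The $c = 1$ statement is essentially a corollary of the general one, plus one short remark to guarantee that $B'$ is \emph{positive}. When $c = 1$, the only nonempty subset of $[c] = \{1\}$ is $\{1\}$ itself, and for every $n \gg 0$ the initial ideal $\ini_{\le}(I_n)$ is a nonzero proper monomial ideal whose unique minimal cover is therefore $\{1\}$, giving $\gamma(\Icc) = 1$. The general lower bound then specializes to $\pd(I_n) \ge n + B$ for some integer $B$ and all $n \gg 0$, and comparing with the upper bound $\pd(I_n) \le n - 1$ forces $B \le -1$; setting $B' = -B \ge 1$ produces the desired positive integer. The main obstacle is absorbed entirely into the reference to \Cref{dim}, whose combinatorial identification of the leading coefficient of $\codim(I_n)$ is the substantive work; beyond that the argument reduces to Hilbert's Syzygy Theorem and a brief case check guaranteeing the positivity of $B'$.
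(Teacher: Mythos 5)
Your proposal is correct and matches the paper's own argument: the upper bound via Hilbert's Syzygy Theorem applied to $R_n/I_n$, the lower bound by combining Inequality \eqref{eq51} with \Cref{dim}, and the $c=1$ case by noting $\gamma(\Icc)=1$ and using the upper bound to force positivity of $B'$. Nothing essential differs from the paper's route.
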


 Note that the upper bound for $\pd (I_n)$ in this proposition follows from Hilbert's Syzygy Theorem, while the lower bound is a consequence of \Cref{dim} and Inequality \eqref{eq51}. In the case that $\Icc=(I_n)_{n\ge 1}$ is a chain of monomial ideals, the lower bound for $\pd (I_n)$ can be considerably improved; see \cite[Theorems 4.6, 4.10]{LNNR2}.

We conclude this section with the following result of Murai \cite[Corollary 3.7]{Mu} which verifies \Cref{conj_pd} for $\Sym$-invariant chains of monomial ideals in the case $c=1$. Together with \Cref{c=1_reg} this is another consequence of Murai's investigation of the asymptotic behavior of Betti tables along such chains, to be discussed in the next section.

\begin{thm}
\label{c=1_pd}
 Let $c=1$ and let $\Icc=(I_n)_{n\ge 1}$ be a nonzero $\Sym$-invariant chain of proper monomial ideals. Assume that $\Icc$ is generated by the partitions $\ba_1,\dots,\ba_m$ of length at most $r$. Then there is a positive integer $B'$ such that
\[
  \pd (I_n) = n-B'\quad\text{for }\ n\ge r. 
\]
\end{thm}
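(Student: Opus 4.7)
The plan is to use the Auslander-Buchsbaum formula to recast the theorem in terms of depth. Since $R_n$ is a polynomial ring in $n$ variables, we have
\[
\pd(I_n) = \pd(R_n/I_n) - 1 = n - 1 - \operatorname{depth}(R_n/I_n),
\]
so the desired equality $\pd(I_n) = n - B'$ for $n \geq r$ is equivalent to $\operatorname{depth}(R_n/I_n)$ being constant (equal to $B' - 1$) for all $n \geq r$. By \Cref{pd both bounds}, we already know this depth is a bounded nonnegative integer for large $n$; thus the real task is stabilization from stage $r$ onwards.

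The key structural input is the isomorphism
\[
R_{n+1}/(I_{n+1} + (x_{n+1})) \cong R_n/I_n \quad \text{for all } n \geq r,
\]
which follows from the length-at-most-$r$ hypothesis: setting $x_{n+1} = 0$ kills precisely the generators of $I_{n+1}$ whose support contains $x_{n+1}$, and the remaining generators, namely the $\Sym(n)$-translates of $u_1, \ldots, u_m$, are the generators of $I_n$. Substituting into the short exact sequence
\[
0 \to \bigl(R_{n+1}/(I_{n+1}:x_{n+1})\bigr)(-1) \xrightarrow{\;\cdot x_{n+1}\;} R_{n+1}/I_{n+1} \to R_n/I_n \to 0
\]
and invoking the depth lemma, the stabilization of $\operatorname{depth}(R_n/I_n)$ gets reduced to controlling $\operatorname{depth}\bigl(R_{n+1}/(I_{n+1}:x_{n+1})\bigr)$. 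Specifically, one wants the comparison
\[
\operatorname{depth}\bigl(R_{n+1}/(I_{n+1}:x_{n+1})\bigr) \geq \operatorname{depth}(R_n/I_n) + 1,
\]
which, together with the depth lemma, forces $\operatorname{depth}(R_{n+1}/I_{n+1}) = \operatorname{depth}(R_n/I_n)$, yielding the required preservation.

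By $\Sym(n+1)$-invariance, the colon ideal $I_{n+1}:x_{n+1}$ is invariant under the copy of $\Sym(n) \subset \Sym(n+1)$ fixing the last index, and its minimal monomial generators admit an explicit combinatorial description obtained from the partitions $\ba_1, \ldots, \ba_m$. One natural route is then to induct on a complexity measure of the generating partitions (for instance the total degree $|\ba_1| + \cdots + |\ba_m|$), with the trivial base case being the maximal ideal $(x_1, \ldots, x_n)$ (where the depth is $0$ for all $n$). Alternatively, one can avoid the colon altogether by polarizing $I_n$ to a squarefree monomial ideal in a larger polynomial ring (a procedure that preserves $\pd$) and then applying Hochster's formula: the depth of $R_n/I_n$ is then controlled by the reduced simplicial (co)homology of certain subcomplexes of the associated Stanley-Reisner complex, and the $\Sym$-invariance should force these topological invariants to stabilize with $n$.

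The main obstacle will be ensuring that whichever combinatorial or topological invariant one uses genuinely stabilizes at stage $r$, rather than only eventually. The bounds of \Cref{pd both bounds} give only asymptotic control, so turning them into the precise statement $\pd(I_n) = n - B'$ for \emph{all} $n \geq r$ requires a uniform argument that sharply exploits the length-$r$ hypothesis on the generating partitions --- in particular, the sharp fact that passing from level $n$ to $n+1$ adds a new variable that none of the $\ba_i$ directly depend on.
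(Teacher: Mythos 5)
Your reduction via Auslander--Buchsbaum and the exact sequence
\[
0 \to \bigl(R_{n+1}/(I_{n+1}:x_{n+1})\bigr)(-1) \xrightarrow{\;\cdot x_{n+1}\;} R_{n+1}/I_{n+1} \to R_{n+1}/(I_{n+1}+(x_{n+1}))\cong R_n/I_n \to 0
\]
is a reasonable starting point (and the isomorphism on the right is fine for $n\ge r$), but the argument has two genuine gaps. First, the logical deduction at the end is invalid: from $\operatorname{depth}\bigl(R_{n+1}/(I_{n+1}:x_{n+1})\bigr)\ge \operatorname{depth}(R_n/I_n)+1$ the depth lemma only gives $\operatorname{depth}(R_{n+1}/I_{n+1})\ge \operatorname{depth}(R_n/I_n)$; it does not force equality. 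The sequence $0\to R(-1)\xrightarrow{\cdot x}R\to R/(x)\to 0$ over $R=K[x]$ satisfies your hypothesis (depth of the left term exceeds that of the right by one) while the middle term has strictly larger depth than the right one, so the possibility that $\operatorname{depth}$ jumps up --- equivalently that $\pd(I_{n+1})<\pd(I_n)+1$ --- is not excluded by these inequalities alone. To pin the depth down you must actually exhibit a nonvanishing $\operatorname{Tor}$ (or local cohomology) class in the critical homological degree at every stage $n\ge r$, which is exactly what the cited work does. Second, the key inequality on the colon ideal is nowhere established: you only assert that $I_{n+1}:x_{n+1}$ has ``an explicit combinatorial description'' and then gesture at an induction on total degree, or at polarization plus Hochster's formula with the remark that $\Sym$-invariance ``should force'' stabilization. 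That stabilization is precisely the hard content of the theorem, and \Cref{pd both bounds} only gives bounds for $n\gg0$, so nothing in your sketch yields the exact statement $\pd(I_n)=n-B'$ for \emph{all} $n\ge r$ --- an obstacle you acknowledge but do not overcome.

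For comparison, the survey does not prove this statement by a depth recursion at all: it records it as Murai's result, obtained as a corollary of the stabilization of the whole Betti table (\Cref{c=1_betti}), which Murai proves by computing the multigraded components of $\Tor_i(I_n,K)$ via homology of simplicial complexes; the projective dimension statement (and \Cref{c=1_reg}) then falls out because each line segment of nonzero Betti positions extends by exactly one step when $n$ increases by one. If you want to pursue your route, the work you still have to do --- controlling $I_{n+1}:x_{n+1}$ and ruling out depth jumps uniformly for $n\ge r$ --- is essentially equivalent to redoing that multigraded analysis (or Raicu's $\Ext$ computation), so as it stands the proposal is a plan with the decisive steps missing rather than a proof.
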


It is worth noting that the integer $B'$ in the previous theorem can be combinatorially determined; see \cite[Theorem on regularity and projective dimension]{Ra}.

\section{Syzygies and Betti tables}\label{sec6}

Given an $\Inc$-invariant chain $\Icc=(I_n)_{n\ge 1}$ of graded ideals, what can be said about the asymptotic behavior of the syzygies of $I_n$? A first answer to this question was provided by Nagel and R\"{o}mer \cite{NR17+}, who established a
stabilization result for the syzygy modules of $I_n$ when $n\gg0$. However, a full description of this interesting result would require the theory of FI- and OI-modules with varying coefficients that is beyond the scope of this survey. Therefore, in this section, we will only discuss a rather informal version of Nagel-R\"{o}mer's result and its consequence on the Betti table of $I_n$. We close the section with a recent result of Murai \cite{Mu}, which provides more information on the Betti table of $I_n$ in the case $c=1$ and $\Icc=(I_n)_{n\ge 1}$ is a $\Sym$-invariant chain of monomial ideals.

Let us begin with a simple example.

\begin{example}
Let $c=1$ and consider the $\Inc$-invariant chain $\Icc=(I_n)_{n\ge 1}$ with
 \[
  I_n=\langle x_jx_k\mid 1\le j<k\le n\rangle\subseteq R_n.
 \]
Then the first syzygies of $I_n$ are given by the following equations:
\begin{align*}
 x_{k+1}(x_jx_k)-x_k(x_jx_{k+1})&=0\quad\text{for }\ j<k,\\
 x_{j+1}(x_jx_k)-x_j(x_{j+1}x_{k})&=0\quad\text{for }\ j+1<k.
\end{align*}
We see that these syzygies are determined through the $\Inc$-action by two first syzygies of $I_3$, namely,
\begin{align*}
 x_{3}(x_1x_2)-x_2(x_1x_{3})&=0,\\
 x_{2}(x_1x_3)-x_1(x_{2}x_{3})&=0.
\end{align*}
\end{example}

This example illustrates a much more general result due to Nagel and R\"{o}mer \cite[Theorem 7.1]{NR17+}. Informally, it implies that for any $\Inc$-invariant chain $\Icc=(I_n)_{n\ge 1}$ of graded ideals and any integer $p\ge0$, the $p$-syzygies of the ideals $I_n$ ``look alike'' eventually. Slightly more precisely, this means that there exists a positive integer $n_p$ such that the $p$-syzygies of $I_n$ are determined through the $\Inc$-action by the $p$-syzygies of $I_{n_p}$ for all $n\ge n_p$. Note that \cite[Theorem 7.1]{NR17+} is actually stated in a more general context of FI- and OI-modules. See also \cite[Theorem A]{Sn} for a related result.

The above result of Nagel and R\"{o}mer leads to the following stabilization of the Betti table (see \cite[Theorem 7.7]{NR17+} for a more general version):

\begin{thm}
\label{betti}
 Let $\Icc=(I_n)_{n\ge 1}$ be an $\Inc$-invariant chain of graded ideals. Then for any integer $p\ge0$ the set
 \[
  \{j\in\Z\mid \beta_{p,j}(I_n)\ne0\}
 \]
stabilizes for $n\gg0$. More precisely, there exist integers $j_0<\cdots<j_t$ depending on $p$ and $\Icc$ such that for $n\gg0$,
\[
 \beta_{p,j}(I_n)\ne0\quad\text{if and only if}\quad j\in\{j_0,\dots,j_t\}.
\]
\end{thm}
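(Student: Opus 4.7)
The plan is to deduce \Cref{betti} from the syzygy stabilization informally stated just before it, working in the OI-module (or FI-module with varying coefficients) framework of Nagel and R\"omer. The main point is that the assignment $n\mapsto \Tor^{R_n}_p(I_n,K)$ should organize into a finitely generated graded OI-module, so that each of its graded components $n\mapsto \Tor^{R_n}_p(I_n,K)_j$ is itself a finitely generated OI-module with a controlled eventual nonvanishing pattern.

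First I would use the syzygy stabilization to observe that for $n\ge n_p$ the minimal $p$-syzygies of $I_n$ are $R_n$-generated by $\Inc_{n_p,n}$-images of minimal $p$-syzygies of $I_{n_p}$. Because the action $\pi\cdot x_{k,j}=x_{k,\pi(j)}$ preserves the internal grading of $R$, each such image lies in the same internal degree as its source, so after Nakayama reduction the minimal generating degrees of $\Tor^{R_n}_p(I_n,K)$ are contained in those of $\Tor^{R_{n_p}}_p(I_{n_p},K)$. This yields the inclusion
\[
\{j\in\Z\mid \beta_{p,j}(I_n)\ne0\}\subseteq\{j\in\Z\mid \beta_{p,j}(I_{n_p})\ne 0\}
\]
for all $n\ge n_p$, and hence a fixed finite list of candidate degrees $j_0<\cdots<j_t$.

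Second, for each fixed $(p,j)$ in this finite list I would show that $\beta_{p,j}(I_n)$ is either eventually zero or eventually nonzero. The idea is to recognize $n\mapsto \Tor^{R_n}_p(I_n,K)_j$ as a finitely generated OI-module over $K$ (a graded strand of the OI-module in the previous paragraph), and then invoke the structural result that for a finitely generated OI-module $M$ the dimension function $n\mapsto \dim_K M_n$ eventually agrees with a polynomial in $n$. A polynomial in $n$ is either identically zero or has only finitely many zeros, so $\dim_K M_n$ is either $0$ for $n\gg 0$ or strictly positive for $n\gg 0$. Running this dichotomy over the finitely many candidate degrees singles out the subset $\{j_0,\dots,j_t\}$ on which $\beta_{p,j}(I_n)\ne 0$ for $n\gg 0$, which is precisely the statement of \Cref{betti}.

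The main obstacle is verifying that $n\mapsto \Tor^{R_n}_p(I_n,K)$ really does define a finitely generated OI-module. This requires constructing functorial transition maps between the various $\Tor$ groups that are compatible with the OI-structure, together with a finite generation statement that ultimately rests on the Noetherianity of the Nagel--R\"omer FI/OI framework (applied, for instance, to the minimal free resolutions of the $I_n$ assembled into an OI-complex). I would spend the bulk of the proof setting this up carefully, deducing finite generation of the OI-module of $p$-th $\Tor$ groups from the finite generation of the OI-module of $p$-th syzygies; the conclusion of \Cref{betti} would then follow formally from the dichotomy above combined with the inclusion of supports from the first step.
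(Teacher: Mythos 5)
Your overall strategy is essentially the one the paper intends: the survey gives no independent proof of \Cref{betti}, but derives it from the finitely generated syzygy/OI-resolution machinery of Nagel and R\"omer \cite[Theorems 7.1, 7.7]{NR17+}, and your reduction to finite generation of the $\Tor$-groups as OI-modules over $K$ is exactly that mechanism. Two cautions about your implementation, though. First, minimal free resolutions of the individual $I_n$ do not assemble functorially into an OI-complex (the comparison maps lifting $R_m\to R_n$ are not canonical), so you should instead take an arbitrary finitely generated graded free OI-resolution, which exists by the Noetherianity of the framework and whose width-$n$ components are still free $R_n$-resolutions of $I_n$; since $\Tor$ is insensitive to minimality, tensoring this resolution with $K$ yields a complex of finitely generated graded OI-modules over $K$ whose width-wise homology is $\Tor_p^{R_n}(I_n,K)$, and finite generation of the homology then follows from Noetherianity of OI-modules over the field. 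With a non-minimal resolution your first step should be weakened accordingly: you only get that $\{j\mid\beta_{p,j}(I_n)\ne0\}$ is eventually contained in the fixed finite set of generator degrees of the $p$-th term of the resolution, not necessarily in $\{j\mid\beta_{p,j}(I_{n_p})\ne0\}$ --- but a fixed finite set is all the theorem requires. Second, your appeal to eventual polynomiality of dimensions of finitely generated OI-modules over $K$ is a genuine theorem that would itself need justification (it is available via the Gr\"obner-theoretic results of \cite{SS-14}, cf.\ \cite{GS18}), but it is heavier than necessary for the dichotomy: if $M$ is an OI-module over $K$ generated in widths at most $g$, then every OI-morphism from a width $\le g$ to a width $m\ge n\ge g$ factors through width $n$, so $M_n=0$ for a single $n\ge g$ forces $M_m=0$ for all $m\ge n$; applying this to each graded strand $\Tor_p^{R_\bullet}(I_\bullet,K)_j$ (which inherits finite generation because the structure maps preserve internal degree) gives directly that each candidate degree $j$ is either eventually absent or eventually present, completing the proof without any Hilbert-function input.
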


This result implies that for any $p\ge 0$, the $p$-th column of the Betti table of $I_n$ has a stable shape whenever $n\gg0$. In the special case when $c=1$ and $\Icc=(I_n)_{n\ge 1}$ is a $\Sym$-invariant chain of monomial ideals, Murai \cite{Mu} recently obtained a much stronger result, yielding the stabilization of the whole Betti table of $I_n$ when $n\gg0$. Before stating his result, let us consider an example.

\begin{example}
 As in \Cref{ex411}, let $\Icc=(I_n)_{n\ge 1}$ be the $\Sym$-invariant chain generated by the two partitions $(4,1)$ and $(3,3)$. The Betti tables of some ideals in this chain, computed by Macaulay2, are shown in \Cref{fig2}.

{\tiny
\begin{figure}[h]
\begin{minipage}[t]{.15\linewidth}
\begin{tabular}[t]{c|cc}
$I_2$&0&1\\
\hline 
\text{5}&\text{2}&\text{.}\\
\text{6}&1&2\\
\end{tabular}
\end{minipage}
\begin{minipage}[t]{.2\linewidth}
\begin{tabular}[t]{c|ccc}
$I_3$&0&1&2\\
\hline 
\text{5}&\text{6}&3&\text{.}\\
\text{6}&3&6&\text{.}\\
\text{7}&\text{.}&\text{.}&\text{.}\\
\text{8}&\text{.}&2&3
\end{tabular}
\end{minipage}
\begin{minipage}[t]{.25\linewidth}
\begin{tabular}[t]{c|cccc}
$I_4$&0&1&2&3\\
\hline 
\text{5}&\text{12}&12&\text{4}&\text{.}\\
\text{6}&6&12&.&\text{.}\\
\text{7}&\text{.}&\text{.}&\text{.}&\text{.}\\
\text{8}&\text{.}&\text{8}&\text{12}&\text{.}\\
\text{9}&\text{.}&\text{.}&\text{.}&\text{.}\\
\text{10}&\text{.}&\text{.}&3&4
\end{tabular}
\end{minipage}
\begin{minipage}[t]{.3\linewidth}
\begin{tabular}[t]{c|ccccc}
$I_5$&0&1&2&3&4\\
\hline 
\text{5}&\text{20}&30&\text{20}&\text{5}&\text{.}\\
\text{6}&10&20&.&.&\text{.}\\
\text{7}&\text{.}&\text{.}&\text{.}&\text{.}&\text{.}\\
\text{8}&\text{.}&20&30&\text{.}&\text{.}\\
\text{9}&\text{.}&\text{.}&\text{.}&\text{.}&\text{.}\\
\text{10}&\text{.}&\text{.}&\text{15}&\text{20}&\text{.}\\
\text{11}&\text{.}&\text{.}&\text{.}&\text{.}&\text{.}\\
\text{12}&\text{.}&\text{.}&\text{.}&\text{4}&\text{5}
\end{tabular}
\end{minipage}\\[10pt]
        
\hfill
\begin{minipage}{.45\linewidth}
\begin{tabular}[t]{c|ccccccc}
$I_6$&0&1&2&3&4&5\\
\hline 
\text{5}&\text{30}&60&\text{60}&\text{30}&\text{6}&\text{.}\\
\text{6}&15&30&.&.&.&\text{.}\\
\text{7}&\text{.}&\text{.}&.&\text{.}&\text{.}&\text{.}\\
\text{8}&\text{.}&\text{40}&\text{60}&.&\text{.}&\text{.}\\
\text{9}&\text{.}&\text{.}&\text{.}&\text{.}&.&\text{.}\\
\text{10}&\text{.}&\text{.}&\text{45}&\text{60}&\text{.}&\text{.}\\
\text{11}&\text{.}&\text{.}&\text{.}&\text{.}&\text{.}&\text{.}\\
\text{12}&.&\text{.}&\text{.}&\text{24}&\text{30}&\text{.}\\
\text{13}&\text{.}&\text{.}&\text{.}&\text{.}&\text{.}&\text{.}\\
\text{14}&.&\text{.}&\text{.}&\text{.}&\text{5}&\text{6}
\end{tabular}
\end{minipage}
\begin{minipage}{.45\linewidth}
\begin{tabular}[t]{c|ccccccc}
$I_6$&0&1&2&3&4&5\\
\hline 
\text{5}&{\color{red}\textcircled{1}}&{\color{red}\textcircled{1}}&{\color{red}\textcircled{1}}&{\color{red}\textcircled{1}}&{\color{red}\textcircled{1}}&\text{.}\\
\text{6}&{\color{green}\textcircled{2}}&{\color{blue}\textcircled{3}}&.&.&.&\text{.}\\
\text{7}&\text{.}&\text{.}&.&\text{.}&\text{.}&\text{.}\\
\text{8}&\text{.}&{\color{green}\textcircled{2}}&{\color{blue}\textcircled{3}}&.&\text{.}&\text{.}\\
\text{9}&\text{.}&\text{.}&\text{.}&\text{.}&.&\text{.}\\
\text{10}&\text{.}&\text{.}&{\color{green}\textcircled{2}}&{\color{blue}\textcircled{3}}&\text{.}&\text{.}\\
\text{11}&\text{.}&\text{.}&\text{.}&\text{.}&\text{.}&\text{.}\\
\text{12}&.&\text{.}&\text{.}&{\color{green}\textcircled{2}}&{\color{blue}\textcircled{3}}&\text{.}\\
\text{13}&\text{.}&\text{.}&\text{.}&\text{.}&\text{.}&\text{.}\\
\text{14}&.&\text{.}&\text{.}&\text{.}&{\color{green}\textcircled{2}}&{\color{blue}\textcircled{3}}
\end{tabular}
\end{minipage}
\caption[]{\begin{varwidth}[t]{.7\linewidth}Betti tables of some $I_n$. The circled numbers in the last table\\ represent 3 line segments.\end{varwidth}}
\label{fig2}
\end{figure}
}
As one might have noticed, these tables suggest that the set
\[
 \{(i,j)\in\Z^2\mid \beta_{i,i+j}(I_n)\ne0\}
\]
of nonzero positions in the Betti table of $I_n$ is a union of \emph{line segments} of length $n-2$. Here, for integers $i,j,s,\ell\ge0$, a line segments of length $\ell$ with starting point $(i,j)$ and slope $s$ is the following subset of $\Z^2$:
\[
 \mathcal{L}((i,j),s,\ell)=\{(i+k,j+sk)\in\Z^2\mid k=0,1,\dots,\ell\}.
\]
In our example, it holds for $n\ge2$ that (see \cite[Proposition 2.7]{Mu}):
\begin{align*}
 \{(i,j)\mid \beta_{i,i+j}(I_n)\ne0\}=\mathcal{L}((0,5),0,n-2)\cup\mathcal{L}((0,6),2,n-2)\cup\mathcal{L}((1,6),2,n-2).
\end{align*}
\end{example}

The main result of Murai \cite[Theorem 1.1]{Mu} asserts that the phenomenon observed in the previous example is true asymptotically for any $\Sym$-invariant chain of monomial ideals:

\begin{thm}
\label{c=1_betti}
 Let $c=1$ and let $\Icc=(I_n)_{n\ge 1}$ be a $\Sym$-invariant chain of monomial ideals. Assume that $\Icc$ is generated by the partitions $\ba_1,\dots,\ba_m$ of length at most $r$. Then there exist finite sets $M\subseteq\Z_{\ge0}^2$ and $N\subseteq\{0,1,\dots,r-1\}$ such that for every $n\ge r$ one has
 \[
  \{(i,j)\mid \beta_{i,i+j}(I_n)\ne0\}=\bigcup_{(i,j)\in M,\; s\in N}\mathcal{L}((i,j),s,n-r)\cup\{(i,j)\mid \beta_{i,i+j}(I_{r-1})\ne0\}.
 \]
\end{thm}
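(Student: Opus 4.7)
The plan is to induct on $n \ge r$, leveraging the $\Sym(n)$-invariance of $I_n$ to peel off the variable $x_n$ one step at a time. The base case $n = r$ defines the set $M$: the bidegrees $(i,j)$ that are new to the Betti table of $I_r$ relative to that of $I_{r-1}$, with an appropriate convention to handle the endpoints of line segments beginning at $n = r$.

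For the inductive step, I would use the short exact sequence
\[
0 \longrightarrow \bigl(R_n/(I_n : x_n)\bigr)(-1) \xrightarrow{\,\cdot\, x_n\,} R_n/I_n \longrightarrow R_n/(I_n, x_n) \longrightarrow 0.
\]
By $\Sym(n)$-invariance, $(I_n, x_n) \cap R_{n-1} = I_{n-1}$, so $R_n/(I_n, x_n)$ is, as an $R_n$-module with $x_n$ acting as $0$, isomorphic to $R_{n-1}/I_{n-1}$; its $R_n$-Betti numbers are obtained from the $R_{n-1}$-Betti numbers of $R_{n-1}/I_{n-1}$ via the Koszul shift on $x_n$. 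The colon ideal $I_n : x_n$ is $\Sym(n-1)$-invariant, and inspecting how $x_n$ divides the orbit generators $x_{\sigma(1)}^{a_{i,1}}\cdots x_{\sigma(k_i)}^{a_{i,k_i}}$ shows that $(I_n:x_n)\cap R_{n-1}$ fits into an auxiliary $\Sym$-invariant chain whose generating partitions are obtained from the $\ba_i$ by lowering one entry by $1$ (together with a copy of $I_{n-1}$). By induction, both strata have Betti tables given by line-segment unions.

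Taking the long exact sequence of $\Tor^{R_n}(-,K)$ and tracking how bidegrees shift yields a bigraded recursion for $\beta_{i,i+j}(I_n)$. In the $(i,j)$-coordinate system of the theorem, the Koszul shift by $x_n$ is a slope-$0$ step, while the other slopes $s \in N$ arise from the way the partition data changes when passing to the colon ideal; the finite set $N$ is then read off from the exponents $a_{i,j}$ in the generating partitions. Iterating this one-step recursion stretches each line segment by exactly one point in $I_n$'s Betti table and introduces no new starting points for $n > r$, which is precisely the claimed form. The main obstacle I expect is minimality of the resulting mapping cone: the long exact sequence a priori yields only a (possibly non-minimal) resolution of $R_n/I_n$, and cancellations between the two strata could destroy the clean line-segment pattern. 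I would address this by combining the sharp formulas of Theorems \ref{c=1_reg} and \ref{c=1_pd}, which pin down the outer envelope of the Betti table, with a bidegree-separation argument showing that the contributions of $R_n/(I_n,x_n)$ and $R_n/(I_n:x_n)(-1)$ occupy disjoint rows of $I_n$'s Betti table for $n \ge r$, so that no cancellations can occur. Establishing this disjointness, and extracting the universal slope set $N$ from the $\ba_i$, is the technical heart of the argument.
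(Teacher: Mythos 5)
The paper itself contains no proof of this statement: it is quoted from Murai \cite[Theorem 1.1]{Mu}, and the survey records that Murai's argument works with the $\Z^n$-graded Betti numbers, identifying each multigraded component of $\Tor_i(I_n,K)$ with the reduced homology of an explicit simplicial complex, so that the non-vanishing positions, the line segments, and the slope set $N$ are read off from an exact multidegree-by-multidegree computation in which no cancellation question ever arises. Your induction on $n$ via the sequence $0 \to (R_n/(I_n:x_n))(-1)\xrightarrow{\;x_n\;} R_n/I_n \to R_n/(I_n,x_n)\to 0$ is a genuinely different strategy, but as written it has two gaps located exactly where the difficulty of the theorem sits.

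First, the colon term: the minimal generators of $I_n:x_n$ include monomials $x_n^{a_{i,j}-1}v$ with $v\in R_{n-1}$, so $I_n:x_n$ is neither extended from $R_{n-1}$ nor an ideal of a $\Sym$-invariant chain generated by partitions; the inductive hypothesis, which you have only for such chains, therefore does not apply to it, and replacing it by $(I_n:x_n)\cap R_{n-1}$ discards precisely the $x_n$-power information the long exact sequence needs. Closing the recursion would force you to enlarge the class of ideals (roughly, $\Sym(n-1)$-invariant ideals together with prescribed powers of the last variables times such ideals) and prove the segment structure for that larger class --- this is the bulk of the work and is not addressed. Second, the no-cancellation step: the claim that the contributions of $R_n/(I_n,x_n)$ and of $(R_n/(I_n:x_n))(-1)$ occupy disjoint rows is unsubstantiated and in general false, since $I_n:x_n\supseteq I_{n-1}R_n$ and its Betti table typically repopulates the same rows already present on the quotient side (visible in the example generated by $(4,1)$ and $(3,3)$, where rows $5,6,8,\dots$ recur); the mapping cone then gives only upper bounds, and the persistence of every point of every segment remains unproven. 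Invoking \Cref{c=1_pd} (and \Cref{c=1_reg}) to pin down the envelope is moreover circular in this context, because in the survey these are consequences of the very theorem being proved (only Raicu's independent argument \cite{Ra} for the regularity would avoid the circle). The exact multigraded analysis carried out by Murai is what resolves both issues at once, and it is also what actually determines $N$ --- the slopes are governed by the exponents $a_{i,j}$ (in the quoted example a slope $2$ occurs although $r=2$), so they cannot be extracted from coarse degree bookkeeping along the long exact sequence.
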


This theorem implies the stabilization of the shape of the Betti table of $I_n$ for $n\ge r$: the nonzero positions in the Betti table of $I_{n+1}$ is obtained from the ones of $I_n$ by just extending each line segment $\mathcal{L}((i,j),s,n-r)$ one point further. Thus, in particular, the shape of the Betti table of $I_r$ completely determines that of $I_n$ for all $n\ge r$.
Interesting consequences of this fact are the asymptotic behavior of $\reg(I_n)$ and $\pd(I_n)$ mentioned earlier (see Theorems \ref{c=1_reg} and \ref{c=1_pd}) 

It should be noted that Murai proved \Cref{c=1_betti} by studying the multigraded components of the Tor modules $\Tor_i(I_n,K)$ through homology modules of simplicial complexes. In fact, he was able to determine the (non-)vanishing of the multigraded Betti numbers of $I_n$ (see \cite[Theorem 3.2]{Mu}).


\section{Open problems}\label{sec7}

The study of asymptotic behavior of $\Inc$-invariant chains is in its early stage and many interesting problems are still open. Some of them are scattered throughout the previous sections. In this section we provide some further problems.

First of all, the following problem arises naturally from \Cref{CM}.

\begin{prob}
\label{pb_CM}
 Characterize those $\Sym$- or $\Inc$-invariant chains of ideals $\Icc=(I_n)_{n\ge 1}$ for which $I_n$ is Cohen-Macaulay whenever $n\gg0$.
\end{prob}

In the case $c=1$ and $\Icc=(I_n)_{n\ge 1}$ is a $\Sym$-invariant chain of monomial ideals, this problem has been resolved recently by Raicu \cite[Theorem on injectivity of maps from Ext to local cohomology]{Ra}:

\begin{thm}
 Let $c=1$ and let $\Icc=(I_n)_{n\ge 1}$ be a $\Sym$-invariant chain of monomial ideals. Assume that $\Icc$ is generated by the partitions $\ba_1,\dots,\ba_m$ of length at most $r$. Write $\ba_i=(a_{i,1},\dots,a_{i,k_i})$. Then the following are equivalent for $n\ge r$:
 \begin{enumerate}
  \item 
  $I_n$ is Cohen-Macaulay.
  
  \item
  $I_n$ is unmixed.
  
  \item
  $a_{i,1}=\cdots=a_{i,p}$ for every $i=1,\dots,m$, where $p=\dim (R_n/I_n)+1$.
 \end{enumerate}
\end{thm}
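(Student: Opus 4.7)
The plan is to establish the cycle (i) $\Rightarrow$ (ii) $\Rightarrow$ (iii) $\Rightarrow$ (i). A preliminary computation first identifies the minimal primes of $I_n$: using that a monomial $x^{\alpha}$ lies in $I_n$ if and only if the sorted partition of $\alpha$ dominates $\ba_i$ componentwise for some $i$ (a direct consequence of $\Sym(n)$-invariance and the partition form of each $u_i$), one sees that the minimal primes of $I_n$ are exactly the $P_T = \langle x_j : j \in T\rangle$ with $|T| = n - p + 1$, where $p = \min_i k_i$. All minimal primes thus share the height $n - p + 1$, giving $\dim (R_n/I_n) = p - 1$ as in the theorem. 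Consequently unmixedness becomes equivalent to the absence of embedded associated primes, and (i) $\Rightarrow$ (ii) is the classical fact that Cohen-Macaulay quotients have no embedded primes.

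For (ii) $\Rightarrow$ (iii), the approach is by contrapositive. If (iii) fails there exist $i_0$ and $q < p$ with $a_{i_0, q} > a_{i_0, q+1}$; the plan is to construct an explicit monomial $u$ whose sorted exponent is $\ba_{i_0}$ decremented by $1$ at position $q$. Then $u \notin I_n$ (using the antichain-minimality convention on the chosen generators to preclude any other $\ba_i$ being dominated by $\text{sort}(u)$), and the strict descent at $q$ forces $u \cdot x_l \in I_n$ for every $l$ in a set of variables of size $> p-1$, since bumping any coordinate currently at the post-descent value restores dominance over $\ba_{i_0}$. The radical of $I_n : u$ is then a prime $P_{\sigma}$ of height exceeding $n - p + 1$, which (after refining the choice of $u$ if necessary) is an embedded associated prime of $I_n$.

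For (iii) $\Rightarrow$ (i), the most efficient route is via Raicu's explicit description of the Ext modules $\Ext^j_{R_n}(R_n/I_n, R_n)$ alluded to in Section~\ref{sec4}: Cohen-Macaulayness is equivalent to the vanishing of these modules for $j < n - p + 1$, and inspection of the combinatorial formula for the graded pieces reveals this vanishing precisely under (iii). A more elementary alternative is to polarize $I_n$ to a squarefree monomial ideal $I_n^{\mathrm{pol}}$ (which preserves Cohen-Macaulayness), observe that condition (iii) forces a uniform ``rectangular block'' of polarized variables in each generator (the first $p$ positions all carrying the same exponent $a_{i,1}$), and either shell the associated Stanley-Reisner complex or appeal to a Veronese flat extension $K[y_1, \ldots, y_n] \hookrightarrow R_n$ with $y_i = x_i^a$ for a suitable $a$, which relates $I_n$ to the Stanley-Reisner ideal of a skeleton of the simplex (classically Cohen-Macaulay).

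I expect the main obstacle to be (iii) $\Rightarrow$ (i) along the elementary route. When the partitions have varying lengths $k_i > p$, the ``tails'' $v_i \in K[x_{p+1}, \ldots, x_{k_i}]$ couple generators in a way that resists a single uniform flat-descent, and the combinatorics of the polarized simplicial complex becomes intricate. Leveraging Raicu's Ext description bypasses these issues by reducing Cohen-Macaulayness to a direct vanishing check on explicit graded pieces, where the strict-descent criterion in (iii) enters transparently.
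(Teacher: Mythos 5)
Your preliminary analysis (minimal primes are exactly the monomial primes of height $n-p+1$, hence $\dim(R_n/I_n)=p-1$ and unmixedness means no embedded primes) and the implication (i) $\Rightarrow$ (ii) are fine. The genuine gap is (iii) $\Rightarrow$ (i), which is the only hard implication and which you do not actually prove. The survey states this theorem as Raicu's result \cite{Ra} and gives no proof; your primary route is to invoke Raicu's description of the modules $\Ext^j_{R_n}(R_n/I_n,R_n)$, i.e.\ to cite the very source of the theorem, and even then the criterion is misstated: those Ext modules vanish automatically for $j<\codim I_n=n-p+1$ (grade), so Cohen--Macaulayness is the vanishing for $j>n-p+1$, not $j<n-p+1$; moreover ``inspection of the combinatorial formula reveals this vanishing precisely under (iii)'' is an assertion of the result, not an argument. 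Your ``elementary alternative'' breaks down exactly where the difficulty lies, as you concede: the substitution $y_j=x_j^{a}$ is only flat-with-CM-fibers when there is a single uniform exponent, so it handles one generator of shape $(a,\dots,a)$ of length $p$, while for several partitions with distinct values $a_{i,1}$ and nontrivial tails $a_{i,p+1},\dots,a_{i,k_i}$ neither the shellability of the polarized complex nor any substitute is established. So the equivalence is not proved.

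In (ii) $\Rightarrow$ (iii) the core idea (decrement $\ba_{i_0}$ at a descent $q<p$ and colon) is sound, but the stated mechanism and count are wrong, though repairable. For $\ba_{i_0}=(3,3,1)$, $q=2$, $u=x_1^3x_2^2x_3$, one has $ux_l\in I_n$ only for $l=2$ among degree-one bumps, so it is false that ``bumping any coordinate at the post-descent value restores dominance''; also ``a set of variables of size $>p-1$'' is not the relevant bound (you need more than $n-p+1$ variables), and the radical of $I_n:u$ need not be prime. The correct repair: $x_l^{a_{i_0,1}}\in I_n:u$ for every $l\ge q$, so every associated prime of $R_n/(I_n:u)$ --- a nonempty subset of $\operatorname{Ass}(R_n/I_n)$ since $u\notin I_n$ --- contains $x_q,\dots,x_n$ and has height at least $n-q+1>n-p+1$, contradicting unmixedness. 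Finally, make your ``antichain-minimality convention'' explicit and load-bearing: without irredundant generating partitions condition (iii) is not even invariant under the presentation (adding the redundant partition $(3,2)$ to the single generator $(2,2)$ destroys (iii) without changing $I_n$, which remains Cohen--Macaulay), so both the statement and your construction of $u\notin I_n$ require it.
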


Under the assumption of the preceding theorem note that $\dim (R_n/I_n)=p-1$ for all $n\ge r$, where $p$ is the minimal length of a partition $\ba_i$; see \cite[Corollary 3.8]{Mu}.

The next problem is closely related to \Cref{pb_CM}.

\begin{prob}
 Let the chain $\Icc=(I_n)_{n\ge 1}$ be $\Sym$- or $\Inc$-invariant. Study the primary decomposition of $I_n$.
\end{prob}

Finally, in view of Theorems \ref{betti} and \ref{c=1_betti} the following problem is of great interest.

\begin{prob}
 Study the asymptotic behavior of Betti tables of ideals of $\Sym$- or $\Inc$-invariant chains.
\end{prob}

It should be mentioned that two questions related to this problem are proposed by Murai in \cite[Questions 5.3, 5.5]{Mu}, where he asks for a generalization of \Cref{c=1_betti} to $\Inc$-invariant chains and for a way to determine the Betti numbers of ideals in $\Sym$-invariant chains.

\noindent {\bf Acknowledgments.}
 We are grateful to the local organizers of the 26th National School on Algebra  (Constanta, Romania, August 26 - September 1, 2018) for their hospitality and to the editors of this volume for their encouragement. We would like to thank the referees for useful comments.

\end{document}